\theoremstyle{plain}
\newtheorem*{conjecture*}{\protect\conjecturename}
\theoremstyle{plain}
\newtheorem{thm}{\protect\theoremname}
\theoremstyle{plain}
\newtheorem{lem}[thm]{\protect\lemmaname}
\theoremstyle{definition}
\newtheorem{defn}[thm]{\protect\definitionname}
\theoremstyle{plain}
\newtheorem{cor}[thm]{\protect\corollaryname}
\gdef\SetFigFontNFSS#1#2#3#4#5{} 
\gdef\SetFigFont#1#2#3#4#5{} 
\def\clap#1{\hbox to 0pt{\hss#1\hss}}
\DeclareMathOperator{\Tr}{Tr}
\DeclareMathOperator{\ch}{ch}
\DeclareMathOperator{\hght}{ht}
\DeclareMathOperator{\Sym}{Sym}
\definecolor{myblue}{rgb}{0.09,0.32,0.44} 
\theoremstyle{remark}
\newtheorem*{qst*}{Question}
\newtheorem*{rmrks*}{Remarks}
\newlength{\tempindent} 
\newcommand{\lazyenum}{
\setlength{\tempindent}{\parindent} 
\begin{enumerate}[leftmargin=0cm,itemindent=0.7cm,labelwidth=\itemindent,labelsep=0cm,align=left,label=\arabic*)]
\setlength{\parskip}{\smallskipamount}
\setlength{\parindent}{\tempindent}
}
\newcommand{\brmul}{\discretionary{\mbox{$\,\cdot$}}{}{}}
\def\afs#1#2{\href{#1}{\nolinkurl{#2}}}
\def\afs#1#2{\burlalt{#1}{#2}}
\providecommand{\conjecturename}{Conjecture}
\providecommand{\corollaryname}{Corollary}
\providecommand{\definitionname}{Definition}
\providecommand{\lemmaname}{Lemma}
\providecommand{\theoremname}{Theorem}
\begin{document}
\title[The quantum Heisenberg ferromagnet]{The mean-field quantum Heisenberg ferromagnet via representation
theory}
\begin{abstract}
We use representation theory to write a formula for the magnetisation
of the quantum Heisenberg ferromagnet. The core new result is a spectral
decomposition of the function $\alpha_{k}2^{\alpha_{1}+\dotsb+\alpha_{n}}$
where $\alpha_{k}$ is the number of cycles of length $k$ of a permutation.
In the mean-field case, we simplify the formula further, arriving
at a closed-form expression for the magnetisation, which allows to
analyse the phase transition.
\end{abstract}

\author{Gil Alon and Gady Kozma}
\address{GA: Department of Mathematics and Computer Science, The Open University
of Israel, 4353701 Raanana, Israel}
\address{GK: Department of Mathematics and Computer Science, The Weizmann Institute
of Science, 76100 Rehovot, Israel.}
\maketitle

\section{Introduction}

The quantum Heisenberg ferromagnet is one of the simplest multiparticle
quantum system one may imagine. Particles do not move, they are fixed
in lattice positions. Each particle is endowed with a spin and the
interactions are spin interactions. Since spins are described by $2\times2$
matrices the entire system may be described by a $2^{n}\times2^{n}$
matrix, removing the need to involve operator theory, as is common
in quantum mechanics.

In a landmark paper, Dyson, Lieb and Simon analysed the quantum Heisenberg
\emph{antiferromagnet} using an approach known as quantum reflection
positivity \cite{DLS78}. Surprisingly, the ferromagnetic version
could not be analysed using the same approach and is still open (the
classical version of reflection positivity is less sensitive to model
details, see \cite{FSS76,FILS,FILS2}).

An initially unrelated line of research, inspired by classical physics,
was that of particle systems. In 1970, Spitzer \cite{S70} defined
the \emph{exclusion} process: given a graph $G$ (Spitzer was mainly
interested in the case that $G$ is $\mathbb{Z}^{d}$) put particles
on its vertices, coloured white and black, and Poisson clocks on the
edges. When a clock rings, the two particles on the sides of that
edge are exchanged. A similar process, when the particles do not have
two types but rather each particle is unique is called the \emph{interchange}
process. In 1981 Diaconis and Shahshahani \cite{DS}, motivated by
problems in card shuffling, proved that the \emph{mixing time }of
the interchange process on the \emph{complete graph} on $n$ vertices
is $\frac{1}{2}\log n$. In physics' parlance, studying the complete
graph is called \emph{mean-field}, so one may say that Diaconis and
Shahshahani studied the mean-field interchange process. 

The connection between the quantum Heisenberg ferromagnet and the
interchange process was first derived using physics arguments in \cite{P76},
and then made rigorous by works of Conlon and Solovej \cite{CS91}
and Tóth \cite{Toth}. The inverse temperature of the quantum system
translates to the time the interchange process is run, and various
physical quantities translate to various questions about the cycle
structure of the permutation of the interchange particles (one such
quantity will be described in details below). Interestingly, the most
physically relevant setting, that of fixed temperature and system
size going to infinity, translates to studying the interchange process
at constant time, in particular before its mixing time.

This connection generated a lot of interest since the interchange
process is subject to many different attack vectors, both probabilistic
and algebraic. A notable result achieved by a purely probabilistic
approach is that of Schramm \cite{S05}, who did a very fine analysis
of the cycle structure of the interchange process in the mean-field
case. His approach was recently applied to the hypercube \cite{KMU16}
and to the Hamming graph \cite{AKM}. The case that $G$ is a tree
was studied by Angel \cite{A03} and Hammond \cite{H13,H15}. In particular,
\cite{H15} shows a \emph{sharp} phase transition (when the tree has
sufficiently high degrees), an impressive result given that the system
has no inherent monotonicity.

The algebraic approach starts from the observation that the interchange
process is a random walk on $\Sym_{n}$, the group of permutations
of $n$ elements. Hence representation theory may be applied to it,
somewhat similarly to the classic application of Fourier transform
to studying simple random walk. This is most effective in the mean-field
case, since then representation theory gives full linearisation of
the problem. This fact is behind the work of Diaconis and Shahshahani.
It also allows to study the interchange process at constant time,
see \cite{BK}. Nevertheless, representation theory can also be used
for general graphs. For example, it gives a simple formula for the
probability that the interchange process is one large cycle, see \cite{AK}.

In the discussion so far we have ignored an important issue of \emph{weighting}
that appears when translating from the quantum Heisenberg ferromagnet
to the interchange process. This weighting means that all the results
for the interchange process that we have quoted do \emph{not} translate
directly to the quantum model, and may serve only as a heuristic guide.
To explain this any further we need to delve into the details of the
interchange process, and we will do so now. (The quantum system will
be defined in details later, in \S\ref{subsec:quantum})

\subsection{The interchange process}

Let us now define the interchange process (a.k.a.\ the stirring process)
formally, directly defining it as a random walk on $\Sym_{n}$. We
start with the walker at the identity permutation and run it in continuous
time. Every edge of the interaction graph $G$ is attached to a Poisson
clock of rate 1, and when the clock corresponding to an edge $(i,j)$
rings, the position of the walker at that time $\pi(t)$ is changed
to $(ij)\pi(t)$ i.e.\ a transposition is composed from the left.
Notice the attractive feature of using continuous time: for every
vertex $i$, $\pi(t)(i)$ is a continuous-time random walk on $G$.
Readers familiar with the discrete-time formulation of the interchange
process should note that time $t$ corresponds to approximately $tn$
steps of the discrete time process.

Next, let us define two physical quantities, the \emph{free energy}
$Z$ and the \emph{expected squared magnetisation} $m$, already translated
(via \cite{Toth}) to the interchange process. For a permutation $\pi$,
denote by $\alpha_{k}(\pi)$ the number of cycles of length $k$ in
$\pi$ and by $\alpha(\pi)=\sum_{i}\alpha_{i}(\pi)$ the total number
of cycles in $\pi$. Define
\begin{equation}
Z(t)=\mathbb{E}(2^{\alpha(\pi(t))})\qquad m^{2}(t)=\frac{1}{Z(t)}\mathbb{E}\Big(\Big(\sum_{k}k^{2}\alpha_{k}(\pi(t)\Big)2^{\alpha(\pi(t))}\Big).\label{eq:Zmalpha}
\end{equation}
In \S \ref{subsec:quantum} we will explain the connection of $Z$
and $m$ to the quantum system, but for now (\ref{eq:Zmalpha}) will
suffice as the definition.

The most important conjecture about the quantum Heisenberg ferromagnet
is that in dimension $d\ge3$ there is a \emph{phase transition} in
the behaviour of the magnetisation. Let us state formally a version
of it:
\begin{conjecture*}
For every $d\ge3$ there is a critical value, denoted by $t_{c}$,
with the following property. Let the interaction graph $G$ be $\{1,\dotsc,l\}^{d}$
where $\{1,\dotsc,l\}$ is given the graph structure of the cycle
and the power is the usual (``square'') graph product. Then
\begin{align*}
t & <t_{c}\implies m(t)\apprle\sqrt{n}\\
t & >t_{c}\implies m(t)\apprge n
\end{align*}
\end{conjecture*}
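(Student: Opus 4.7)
The plan is to leverage the paper's spectral decomposition of $\alpha_{k}2^{\alpha_{1}+\dotsb+\alpha_{n}}$ in characters of $\Sym_{n}$ to recast both inequalities as questions about the spectrum of the transposition Laplacian $L_{G}=\sum_{(i,j)\in E(G)}(1-(ij))$ on each isotypic component of the regular representation. Once $2^{\alpha}$ and $\sum_{k}k^{2}\alpha_{k}2^{\alpha}$ are written as $\sum_{\lambda}c_{\lambda}\chi_{\lambda}$, the quantities $Z(t)$ and $m^{2}(t)Z(t)$ take the form $\sum_{\lambda}c_{\lambda}\Tr(e^{-tL_{G}}|_{V_{\lambda}})$, and $t_{c}$ should be the value at which the large coefficients $c_{\lambda}$ begin to overtake the decaying traces for a family of Young diagrams $\lambda$ carrying the weight of the second moment. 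The gain from working in the character basis is that on the standard representation $V_{(n-1,1)}$ the operator $L_{G}$ reduces to the generator of simple random walk on $G$, so the spectral information of the torus enters directly.

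For the upper bound $m(t)\apprle\sqrt{n}$ when $t<t_{c}$, my first attempt would be a high-temperature cluster expansion of $e^{-tL_{G}}$, grouping terms by the connected clusters that the transposition sequences trace out in $G$. Uniform-in-$l$ transience of simple random walk in $d\ge3$ should let one control the contribution of clusters of size exceeding $\log n$ and show that $\sum_{k}k^{2}\alpha_{k}=O(n)$ with overwhelming probability under the weighted measure. Extending this up to $t_{c}$, rather than to some $t_{0}<t_{c}$, will likely require random-current or Aizenman--Duminil-Copin style sharpness techniques adapted to the weighted interchange process.

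For the lower bound $m(t)\apprge n$ when $t>t_{c}$, I would try to transfer Schramm's mean-field analysis of macroscopic cycles to the torus, exploiting the fact that in $d\ge3$ the hitting statistics of simple random walk on scales below $l$ are close to mean-field. The goal is to produce a cycle of length $\Theta(n)$ contributing $\Theta(n^{2})$ to $\sum_{k}k^{2}\alpha_{k}$. Here one must argue directly in the character basis that the partitions $\lambda$ with a long first row continue to dominate after reweighting by $2^{\alpha}$, since that weighting \emph{favours} configurations with many small cycles and so a priori suppresses the giant cycle.

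The main obstacle, as the introduction already emphasises, is precisely this $2^{\alpha}$ reweighting: it destroys monotonicity in $t$ and in $G$, breaks the FKG/BK-type inequalities used in the unweighted interchange literature, and rules out any direct coupling to Schramm's or Hammond's results. The bet underlying the plan above is that the representation-theoretic identity of this paper replaces those lost structural tools by diagonalising the weighting in the character basis, reducing a delicate probabilistic phase transition to eigenvalue estimates for $L_{G}$ on individual $V_{\lambda}$. Executing those estimates on the torus --- where, unlike in the mean-field case, $L_{G}$ has no closed-form diagonalisation --- is where I expect the real work to be, and is the point at which the conjecture is most likely to resist a direct attack.
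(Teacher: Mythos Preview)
The statement you are trying to prove is not a theorem of the paper at all: it is explicitly labelled a \emph{Conjecture}, and the paper does not claim to prove it. Immediately after stating it, the authors write that ``what is not known is that there exists an ordered phase \emph{at all}, i.e.\ that for some $t$ sufficiently large we have $m(t)\apprge n$.'' The results actually proved in the paper (Theorems~\ref{thm:phase transition}, \ref{thm:precise}, \ref{thm:nokronecker}) concern only the mean-field case $G=K_{n}$, where $\widehat{\Delta}(\lambda)$ is a scalar and the entire spectral problem collapses to the Diaconis--Shahshahani character ratio. There is therefore no ``paper's own proof'' to compare your proposal against.

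Your write-up is an honest research plan rather than a proof, and you essentially say so yourself in the final paragraph. The concrete gap is exactly the one the paper flags in \S\ref{subsec:algebro-analytic}: for the torus $G=\{1,\dotsc,l\}^{d}$ one would need quantitative control of the eigenvalues $\rho_{i}(\lambda;G)$ of $\widehat{\Delta}(\lambda)$ for all relevant $\lambda$ (those of shape $[a,b,c,1^{d}]$), not just the minimal eigenvalue handled by comparison arguments or the two-row case handled by \cite{T10}. Your proposed ingredients --- high-temperature cluster expansion, random-current sharpness, and a Schramm-style transfer from mean-field --- are plausible directions, but none of them is known to work under the $2^{\alpha}$ weighting, and no combination of them currently yields the required intermediate-scale eigenvalue control. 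Until that is supplied, the argument remains a heuristic outline of an open problem, not a proof.
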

(since $n$ denotes for us the size of the graph, we have $n=l^{d}$
in the conjecture). Here and below, the notation $X\apprle Y$ means
that there is some constant $C>0$, which may depend on $d$ and on
$t$, but not on $n$, such that $X\le CY$ (the conjecture is as
$n$ (or $l$) goes to $\infty$). It is easy to show that for $t$
sufficiently small the behaviour is indeed as stated i.e.\ $m(t)\apprle\sqrt{n}$.
What is not known is that there exists an ordered phase \emph{at all}
i.e.\ that for some $t$ sufficiently large we have $m(t)\apprge n$.

The factors $2^{\alpha}$ that appear in (\ref{eq:Zmalpha}) are the
weights that we mentioned in the previous section. The result of Tóth
from \cite{Toth} is that the naturally defined magnetisation translates
precisely to the quantity $m$ from (\ref{eq:Zmalpha}). But from
the point of view of the interchange process, it is natural to simply
ask if $\sum k^{2}\alpha_{k}$ undergoes a phase transition in the
time $t$. And indeed, all results we mentioned in the previous section
are of this form. For example, Berestycki and Durrett \cite{BD06}
show that, for $G$ being the complete graph, there is a $t_{c}$
such that
\[
t<\frac{t_{c}}{n}\implies\mathbb{E}\big(\sum k^{2}\alpha_{k}(\pi(t))\big)\apprle n\qquad t>\frac{t_{c}}{n}\implies\frac{1}{n}\mathbb{E}\big(\sum k^{2}\alpha_{k}(\pi(t))\big)\to\infty
\]
(we compare $t$ to $t_{c}/n$ rather than to $t_{c}$ because of
the high degree of the complete graph, which speeds the process $n$-fold).
Schramm \cite{S05} improved this by showing that at $t>t_{c}/n$,
$\mathbb{E}(\sum k^{2}\alpha_{k}(\pi(t)))\apprge n^{2}$ (both results
give more information than just the expectation, for example Schramm
gave a description of the distribution of the sizes of the large cycles).
Despite all this progress, the case where the interaction graph G
is $\{1,\dotsc,l\}^{d}$ is still open, even for the question without
the weights.

Our purpose in this paper is twofold:
\begin{enumerate}
\item Prepare for an algebro-analytic attack on the non mean-field case.
\item Give an alternative and potentially more precise analysis of the mean-field
case.
\end{enumerate}
We start with the mean-field results, as they do not require representation
theory to state. Our thoughts about how the non mean-field case may
be attacked via these results are best discussed after some preliminaries
and we will do so in \S \ref{subsec:algebro-analytic}. As above,
the high degree requires a scaling of the time:
\begin{thm}
\label{thm:phase transition}When the interaction graph is the complete
graph we have
\begin{align*}
t & <\frac{2}{n}\implies m(t)\apprle\sqrt{n}\\
t & >\frac{2}{n}\implies m(t)\apprge n.
\end{align*}
\end{thm}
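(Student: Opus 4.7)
The plan is to use representation theory of $\Sym_{n}$ to rewrite both expectations in (\ref{eq:Zmalpha}) as explicit finite sums over two-row partitions of $n$, and then to analyse them in the scaling $t=\tau/n$ by Laplace's method. The standard character input is that the transposition sum $T=\sum_{(ij)}(ij)$ is central in $\mathbb{C}[\Sym_{n}]$ and acts on the Specht module $W_{\lambda}$ by the content sum $c(\lambda)$, so that, since the generator of the mean-field process is $T-\binom{n}{2}I$, one has $\mathbb{E}(\chi^{\lambda}(\pi(t)))=(\dim W_{\lambda})\,e^{t(c(\lambda)-\binom{n}{2})}$. Schur--Weyl duality on $(\mathbb{C}^{2})^{\otimes n}$ expresses $2^{\alpha(\pi)}$ as $\sum_{\ell(\lambda)\le 2}(\dim V_{\lambda})\chi^{\lambda}(\pi)$, where $V_{\lambda}$ is the associated irreducible $GL_{2}$-module with $\dim V_{\lambda}=2s+1$ and $s:=n/2-\lambda_{2}$ the ``spin''. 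These two inputs give $Z(t)$ as a finite sum.

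The core new ingredient is a parallel formula for the numerator of $m^{2}$. Writing $\sum_{k}k^{2}\alpha_{k}(\pi)=\#\{(i,j):i,j\text{ in the same cycle of }\pi\}$ and using $\mathrm{tr}(P_{ij}\rho(\pi))=2^{\alpha((ij)\pi)}$ (with $P_{ij}$ the swap on sites $i,j$ in the Schur--Weyl representation) together with the elementary observation that $\alpha((ij)\pi)=\alpha(\pi)\pm1$ according to whether $i,j$ share a cycle, a short computation gives $\mathbf{1}[i,j\text{ same cycle}]\cdot 2^{\alpha(\pi)}=\tfrac{2}{3}\mathrm{tr}(P_{ij}\rho(\pi))-\tfrac{1}{3}\cdot 2^{\alpha(\pi)}$; summing over ordered pairs replaces $\sum_{i,j}P_{ij}$ by $nI+2\rho(T)$, a \emph{central} element, and the right-hand side again decomposes into characters. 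This is the ``spectral decomposition'' announced in the abstract. Using the algebraic identity $c(\lambda)+n-n^{2}/4=s(s+1)$ for $\ell(\lambda)\le 2$, the factor $e^{-t(n/2)(n/2+1)}$ cancels in the ratio and one arrives at
\begin{equation*}
m^{2}(t)=\frac{4}{3}\cdot\frac{\sum_{s}s(s+1)(2s+1)M_{s}\,e^{ts(s+1)}}{\sum_{s}(2s+1)M_{s}\,e^{ts(s+1)}},\qquad M_{s}=\binom{n}{n/2-s}-\binom{n}{n/2-s-1},
\end{equation*}
with $s$ ranging over the admissible integer or half-integer spins in $[0,n/2]$. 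The magnetisation squared is thus the thermal expectation of $s(s+1)$ for the Gibbs measure on spins with weights $(2s+1)M_{s}\,e^{ts(s+1)}$.

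Setting $t=\tau/n$ and $s=\xi n$ with $\xi\in[0,1/2]$, Stirling gives log-weight $nf(\xi)+O(\log n)$ with $f(\xi)=h(1/2-\xi)+\tau\xi^{2}$ and $h$ the binary entropy; expanding at the origin, $f(\xi)=\ln 2+(\tau-2)\xi^{2}+O(\xi^{4})$. For $\tau<2$, $f$ is maximised at $\xi=0$ and the Gibbs measure is essentially Gaussian in $s$ with variance $\Theta(n/(2-\tau))$, so $\langle s(s+1)\rangle=O(n)$ and $m(t)\apprle\sqrt{n}$. For $\tau>2$, the stationarity equation $\ln\frac{1/2+\xi}{1/2-\xi}=2\tau\xi$ has a unique positive root $\xi^{\star}\in(0,1/2)$ with $f(\xi^{\star})>\ln 2$, the measure concentrates at $s\sim\xi^{\star}n$, and $\langle s(s+1)\rangle=(\xi^{\star})^{2}n^{2}(1+o(1))$, yielding $m(t)\apprge n$. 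The main obstacle is the character-theoretic identity of the second paragraph: once the swap-operator trick makes $\sum_{k}k^{2}\alpha_{k}\cdot 2^{\alpha}$ a character sum with coefficients $\tfrac{4}{3}s(s+1)(\dim V_{\lambda})$, the rest is standard Laplace analysis, the only technical care being at the endpoints $s=O(1)$ and $s$ near $n/2$ where the Gaussian approximation to $M_{s}$ degenerates and tail bounds must be confirmed not to compete with the dominant saddle.
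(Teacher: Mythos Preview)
Your argument is correct and takes a genuinely different route from the paper.  The paper first proves the much finer Theorem~\ref{thm:precise}: it decomposes each $\alpha_{k}2^{\alpha}$ separately via the Frobenius characteristic map and a Murnaghan--Nakayama step, picking up diagrams of the shape $[a,b,c,1^{d}]$, and then runs a two-parameter Laplace analysis over $(b,k)$ with incomplete beta integrals to extract Theorem~\ref{thm:phase transition}.  You instead exploit the special structure of the \emph{sum} $\sum_{k}k^{2}\alpha_{k}$: the swap--operator identity $\mathbf{1}\{i,j\text{ same cycle}\}\cdot 2^{\alpha}=\tfrac{2}{3}\,2^{\alpha((ij)\pi)}-\tfrac{1}{3}\,2^{\alpha}$ (which checks out) makes $(\sum_{k}k^{2}\alpha_{k})2^{\alpha}$ a linear combination of $2^{\alpha}$ and $\Tr(\rho(T)\rho(\pi))$, both of which decompose over \emph{two-row} diagrams only via Schur--Weyl.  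The clean formula $m^{2}=\tfrac{4}{3}\langle s(s+1)\rangle$ with Gibbs weights $(2s+1)M_{s}e^{ts(s+1)}$ is exactly the quantum total-spin picture (and is essentially the route of T\'oth \cite{Toth90} and Penrose \cite{P91} that the paper cites as prior proofs); the ensuing one-dimensional Laplace analysis of $f(\xi)=h(1/2-\xi)+\tau\xi^{2}$ is standard and your identification of the critical value via $f''(0)=2(\tau-2)$ is correct.

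What each approach buys: yours is shorter and more transparent for Theorem~\ref{thm:phase transition} alone, and needs no Murnaghan--Nakayama, caterpillar combinatorics, or hook numbers.  The paper's detour through Theorem~\ref{thm:precise} is deliberate: it yields the expectation of $\alpha_{k}2^{\alpha}$ for each $k$ individually (hence information about the full cycle-length profile, not just its second moment), and the character decomposition of $\alpha_{k}2^{\alpha}$ is meant as preparation for non--mean-field graphs, where the class-function trick that powers your argument is unavailable but the decomposition still holds.  One minor quibble: the identity in your second paragraph is for $i\ne j$, so ``summing over ordered pairs replaces $\sum_{i,j}P_{ij}$ by $nI+2\rho(T)$'' is slightly off---you sum over $i\ne j$ to get $2\rho(T)$ and add the $n$ diagonal terms to the left-hand side separately; the final formula for $m^{2}$ is unaffected.
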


This theorem is not new, it can be inferred from a different paper
of Tóth, \cite{Toth90}, which attacks the mean-field quantum problem
using an approach not related to the interchange process (\cite{Toth90}
discusses the free energy, but the magnetisation should follow from
it by differentiation). Penrose gave a second proof at about the same
time \cite{P91}. Another proof of theorem \ref{thm:phase transition}
was given recently in \cite{B15,B16}, valid when the term $2^{\alpha}$
in (\ref{eq:Zmalpha}) is replaced by $\theta^{\alpha}$ for an arbitrary
$\theta>0$, using the techniques of Schramm \cite{S05}. However,
our techniques give a closed formula for the magnetisation:
\begin{thm}
\label{thm:precise}Under the same conditions as theorem \ref{thm:phase transition},
\[
m_{n}^{2}(t)=n\frac{\sum_{b,k}k\psi(b,k,t)}{\sum_{b,k}\psi(b,k,t)}
\]
the sums running over $k$ from $1$ to $n$ and $b$ from $0$ to
$\lfloor(n-k)/2\rfloor$, and where $\psi$ is defined, for $b>0$
by 
\begin{align*}
\psi(b,k,t) & =(a+1-b)\binom{n}{k-1,a+1,b}\cdot e^{-t(ab+b)}\Biggl[\frac{(a+1-b)y^{a+b+2}(1-y)^{k-1}}{k}\\
 & \qquad\qquad+\;y^{b}(a+1-b+k)\int_{y}^{1}x^{a+1}(1-x)^{k-1}dx\\
 & \qquad\qquad+\;y^{a+1}(a+1-b-k)\int_{y}^{1}x^{b}(1-x)^{k-1}dx\Biggr]
\end{align*}
and for $b=0$ by
\[
\psi(0,k,t)=(a+1)\binom{n}{k}\left[y^{n-k+1}(1-y)^{k-1}+k\int_{y}^{1}x^{n-k}(1-x)^{k-1}dx\right]
\]
and where $a=n-k-b$ and $y=e^{-tk}$.
\end{thm}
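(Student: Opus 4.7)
The plan is to apply Fourier analysis on $\Sym_n$, exploiting that the complete graph's full symmetry makes the generator $L=\sum_{i<j}(T_{(ij)}-I)$ of the interchange process central in $\mathbb{C}[\Sym_n]$. Because the law of $\pi(t)$ is then a class function, for any class function $f=\sum_\lambda\hat f(\lambda)\chi_\lambda$ on $\Sym_n$ one has
\[
\mathbb{E}(f(\pi(t)))=\sum_\lambda\hat f(\lambda)(\dim V_\lambda)e^{t\mu_\lambda},
\]
where $\mu_\lambda=\sum_{\square\in\lambda}c(\square)-\binom{n}{2}$, obtained from the Jucys--Murphy identity $\sum_{i<j}(ij)=\sum_k J_k$ (with $c(i,j)=j-i$ the content of the box in row $i$, column $j$). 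Taking $f=2^{\alpha}$ gives $Z(t)$; taking $f=(\sum_k k^2\alpha_k)\,2^{\alpha}$ gives the numerator of $m^2(t)$. The task thus reduces to (i) computing the Fourier coefficients of these two class functions, and (ii) repackaging the resulting character sums into the closed form of the theorem.

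For $Z$, the classical Schur--Weyl identity for $(\mathbb{C}^2)^{\otimes n}$ yields $2^{\alpha(\pi)}=\sum_{j=0}^{\lfloor n/2\rfloor}(n-2j+1)\chi_{(n-j,j)}(\pi)$, involving only two-row $\lambda$. The decomposition of each $\alpha_k\cdot 2^{\alpha}$ is the ``core new result'' promoted in the abstract. To obtain it I would compute $\frac{1}{n!}\sum_\pi\alpha_k(\pi)2^{\alpha(\pi)}\chi_\lambda(\pi)$ directly: since $k\alpha_k(\pi)$ counts vertices lying in a $k$-cycle of $\pi$, symmetry reduces the problem to summing over permutations whose $k$-cycle through a fixed vertex is prescribed, composed with an arbitrary permutation of the remaining $n-k$ elements. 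The Murnaghan--Nakayama rule converts the $k$-cycle factor in $\chi_\lambda$ into a signed sum over $k$-ribbons removable from $\lambda$, while Schur--Weyl applied to the complementary $\Sym_{n-k}$-factor coming from $2^{\alpha}$ forces the complement of the ribbon to have at most two rows. Hence only $\lambda$ with at most three rows survive, and organising the ribbon configurations by the rows they occupy and their heights leads to coefficients indexed by a secondary parameter $b$ (related to the third row), matching the parametrisation of $\psi$.

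For the final assembly, the hook-content formula supplies $\dim V_\lambda$ as a multinomial coefficient times simple correction factors, accounting for the prefactor $(a+1-b)\binom{n}{k-1,a+1,b}$ (and the $b=0$ reduction to $(a+1)\binom{n}{k}$); direct evaluation of the content sum supplies $e^{t\mu_\lambda}$ as $e^{-t(ab+b)}$ times explicit powers of $y=e^{-tk}$. To convert the remaining sums over the secondary partition parameter into the incomplete beta integrals $\int_y^1 x^m(1-x)^{k-1}dx$ appearing in $\psi$, I would employ identities of the form
\[
\sum_{i=0}^{j}\binom{N}{i}y^{N-i}(1-y)^{i}=y^{N-j}(1-y)^{j}+(N-j)\int_y^1 x^{N-j-1}(1-x)^{j}\,dx,
\]
relating partial binomial sums to the regularised incomplete beta function. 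The separate formula at $b=0$ reflects the degeneration to a two-row partition at the boundary, where one of the ribbon configurations vanishes. The principal obstacle throughout is the spectral decomposition of the second paragraph: executing the ribbon-and-Schur--Weyl bookkeeping, carefully handling the signs induced by ribbon heights, and verifying that the resulting coefficients telescope cleanly into the stated $\psi$; by contrast, the beta-integral repackaging in this last paragraph, while tedious, is essentially routine.
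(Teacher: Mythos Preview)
Your high-level framework---Fourier on $\Sym_n$, centrality of the generator on $K_n$, Schur--Weyl for $2^{\alpha}$, and Murnaghan--Nakayama combined with Schur--Weyl for $\alpha_k 2^{\alpha}$---is exactly the paper's. But two concrete errors would stop the argument.

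First, the claim that ``only $\lambda$ with at most three rows survive'' is false. Adding a $k$-border strip to a two-row shape $[a,b]$ yields diagrams of the form $[a',b',c,1^{d}]$; e.g.\ $E_3([5,3])$ contains $[5,3,1,1,1]$. These long-tailed shapes are the generic members of $E_k([a,b])$, and they are precisely the terms whose summation produces the incomplete beta integrals. Relatedly, you misread the parameter $b$: since $a+b=n-k$, the pair $[a,b]$ is the two-row \emph{base} $\mu$ to which the ribbon is attached, so the sum over $b$ in the statement is the outer sum over $\mu\vdash n-k$, not a sum over ribbon positions. The function $\psi(b,k,t)$ equals $(a+1-b)$ times the already-evaluated inner sum $\sum_{\lambda\in E_k([a,b])}(-1)^{\hght(\lambda\setminus[a,b])+1}d_\lambda e^{-t\rho(\lambda)}$.

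Second, you have the difficulty inverted. The character decomposition $\ch(\alpha_k 2^{\alpha})=\frac{2}{k}u_k\sum_{[a,b]\vdash n-k}(a-b+1)S_{[a,b]}$ takes about a page; the real work is evaluating that inner sum over $E_k([a,b])$, and the partial-binomial/incomplete-beta identity you quote does not apply to it as written. The summand $(-1)^{\hght+1}d_\lambda e^{-t\rho(\lambda)}$ is neither a binomial coefficient nor a simple power of $y$ as a function of the ribbon. The paper's device is to parametrise $E_k([a,b])$ by a single integer $i$ (the lowest row of the strip), then pass to hook-number coordinates $\lambda^{(j)}=\lambda_j+\hght(\lambda)-j$: in these coordinates the content sum $\rho(\lambda)$ becomes linear in $i$, and the Young--Frobenius product formula makes the signed dimension $(-1)^{\hght+1}d_\lambda$ a rational function of $i$ that automatically vanishes at illegal positions. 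Only after a partial-fraction decomposition in $i$ does the sum collapse to $\sum_i(-1)^i\binom{k-1}{i}y^{i}/(i+\mathrm{const})$, which is a beta integral. Your proposal contains no substitute for this reparametrisation, and without it the ``routine'' step is not routine.
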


Concluding theorem \ref{thm:phase transition} from theorem \ref{thm:precise}
is an exercise and we do it in the end. Certainly, a finer analysis
of theorem \ref{thm:precise} would give a very detailed picture of
the phase transition, but we prefer to stick with the relatively rough
theorem \ref{thm:phase transition} as our focus here is the proof
of theorem \ref{thm:precise}. The proof uses representation theory,
so let us turn to this topic now.

\subsection{\label{subsec:Repesentations}Representations of the symmetric group}

Group representations are a non-commutative analogue of the Fourier
transform, so let us use notation suggestive of it. Let $f:\Sym_{n}\to\mathbb{C}$
be some function and let $R:\Sym_{n}\to\mathrm{GL}(V_{R})$ be some
representation of $\Sym_{n}$. We denote 
\[
\widehat{f}(R)=\sum_{\pi\in S_{n}}f(\pi)R(\pi)
\]
so $\widehat{f}(R)$ is a linear map from $V_{R}$ to $V_{R}$. We
have an analogue of Parseval's formula (see, e.g., \cite[theorem 4.1]{D88}),
\[
\langle f,g\rangle=\sum_{R}\frac{d_{R}}{n!}\Tr(\widehat{f}(R)\widehat{g}(R)^{*})
\]
where the sum is over all (equivalence classes of) \emph{irreducible}
unitary representations of $\Sym_{n}$; $d_{R}=\dim V_{R}$ i.e.\ the
dimension of the space the representation $R$ acts upon; and $\langle f,g\rangle=\sum_{\pi\in\Sym_{n}}f(\pi)g(\pi)$
i.e.\ is not normalised. To apply this to, for example, the partition
function, $Z(t)=\mathbb{E}(2^{\alpha(\pi(t))})$ denote by $p_{t}(\pi)$
the probability that $\pi(t)=\pi$ and then 
\begin{equation}
Z(t)=\langle2^{\alpha},p_{t}\rangle=\sum_{R}\frac{d_{R}}{n!}\Tr(\widehat{2^{\alpha}}(R)\widehat{p_{t}}(R)^{*}).\label{eq:Z by Parseval}
\end{equation}
Now, $2^{\alpha}$ is a \emph{class function} i.e.\ a function which
depends only on the cycle structure of the permutation. Hence Schur's
lemma tells us that $\widehat{2^{\alpha}}(R)$ is a scalar matrix
for any irreducible $R$, though it does not give the value of the
scalar. As for $p_{t}$, it is a class function only in the mean-field
case. This is what makes the mean-field case amenable to analysis,
and in fact, $\widehat{p_{t}}$ was calculated explicitly by Diaconis
and Shahshahani \cite{DS}. 

Thus theorem \ref{thm:precise} will follow once we calculate $\widehat{2^{\alpha}}$
(for $Z$) and $\widehat{\alpha_{k}2^{\alpha}}$ (for $m$). To state
these, recall that the irreducible representations of $S_{n}$ can
be indexed by partitions of $n$ (see e.g.\ \cite{JKbook}). We denote
by $R_{\lambda}$ the irreducible representation of $S_{n}$ corresponding
to the partition $\lambda$. We use the standard notation $a^{b}$
for a repetition in a partition, so, for example, $[3,1^{3}]$ is
the partition $6=3+1+1+1$. The case $\widehat{2^{\alpha}}$ is not
difficult (we give the details below):
\begin{equation}
\widehat{2^{\alpha}}(\lambda)=\frac{n!}{d_{\lambda}}I_{\lambda}\begin{cases}
a-b+1 & \lambda=[a,b]\\
0 & \text{otherwise}
\end{cases}\label{eq:2alpha at}
\end{equation}
where $I_{\lambda}$ is the identity matrix of the vector space $V_{\lambda}$
(recall that the values of the non-commutative Fourier transform are
matrices). Here and below we use the short-hand notation $V_{\lambda}=V_{R_{\lambda}}$,
$d_{\lambda}=d_{R_{\lambda}}$ and $\widehat{f}(\lambda)=\widehat{f}(R_{\lambda})$.

For the function $\alpha_{k}2^{\alpha}$ we need to introduce some
further notation. Recall that a \emph{skew diagram }is the set difference
$\lambda\backslash\mu$ of two Young diagrams $\lambda,\mu$, and
a \emph{border strip} is a connected skew diagram which does not contain
a $2\times2$ square. When we say about a skew diagram that it is
connected we mean connection through edges, so $\raisebox{0.25em}{\ytableausetup{boxsize=0.5em}\ydiagram{2,1}}
\setminus
\raisebox{0.25em}{\ytableausetup{boxsize=0.5em}\ydiagram{1,0}}
=
\raisebox{0.25em}{\ytableausetup{boxsize=0.5em}\ydiagram{1+1,1}}$ is not considered to be connected. For a skew diagram $\mu$, the
height $\hght(\mu)$ is defined to be the number of its rows.
\begin{thm}
\label{thm:nokronecker}We have $\widehat{\alpha_{k}2^{\alpha}}(\lambda)=\frac{n!}{d_{\lambda}}a_{\lambda,k}I_{\lambda}$
where$$
a_{\lambda,k}=
\frac{2}{k}
\begin{cases}
  (a-b+1)\cdot(-1)^{\hght(\lambda\setminus [a,b])+1} & 
    \parbox{4cm}{$\exists[a,b]\dashv n-k$ such that 
    $\lambda\setminus[a,b]$ is a border strip}\\
  0&\text{otherwise.}
\end{cases}
$$
\end{thm}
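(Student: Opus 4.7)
The plan is to recognise $n!\,a_{\lambda,k}$ as a character sum, convert it to a symmetric-function inner product via the Frobenius characteristic map, and then invoke the Murnaghan--Nakayama rule for multiplication by a power sum.

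Since $\alpha_k 2^{\alpha}$ is a class function, Schur's lemma forces $\widehat{\alpha_k 2^{\alpha}}(\lambda) = c_\lambda I_\lambda$ for some scalar $c_\lambda$; taking trace gives $c_\lambda d_\lambda = \sum_\pi \alpha_k(\pi)\, 2^{\alpha(\pi)}\, \chi_\lambda(\pi)$, so the claim reduces to showing this character sum equals $n!\,a_{\lambda,k}$. I then pass to symmetric functions via the Frobenius characteristic isometry $f \mapsto \widetilde f = \sum_\mu (f(\mu)/z_\mu)\,p_\mu$, which carries the normalised $S_n$ inner product $\tfrac{1}{n!}\sum_\pi$ to the Hall inner product and sends $\chi_\lambda$ to the Schur function $s_\lambda$. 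In these terms the character sum equals $n!\,\langle \widetilde{\alpha_k 2^{\alpha}},\, s_\lambda\rangle$.

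The main algebraic step is the factorisation
\[
\widetilde{\alpha_k 2^{\alpha}} \;=\; \frac{2\,p_k}{k}\sum_{\mu' \vdash n-k} \frac{2^{\alpha(\mu')}}{z_{\mu'}}\,p_{\mu'},
\]
which drops out of the identities $z_\mu = k\,\alpha_k(\mu)\,z_{\mu'}$ (for $\mu'$ equal to $\mu$ with one part of size $k$ removed), $2^{\alpha(\mu)} = 2\cdot 2^{\alpha(\mu')}$, and $p_\mu = p_k\, p_{\mu'}$. The right-hand sum is precisely the Frobenius image of $2^{\alpha}$ as a class function on $S_{n-k}$; by Schur--Weyl duality on $(\mathbb{C}^2)^{\otimes(n-k)}$ it equals $\sum_{[a,b]\vdash n-k}(a-b+1)\,s_{[a,b]}$, since $2^{\alpha(\pi)}$ is the $S_{n-k}$-character on this tensor space and the multiplicity of $V_{[a,b]}$ in its Schur--Weyl decomposition is the dimension $a-b+1$ of the corresponding $\mathrm{GL}_2$-irreducible. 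This is exactly the ingredient that produces the formula for $\widehat{2^{\alpha}}$ recorded earlier in the paper.

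Taking the Hall inner product with $s_\lambda$ reduces everything to computing $\langle p_k\,s_{[a,b]},\, s_\lambda\rangle$, which the Murnaghan--Nakayama rule in its symmetric-function form evaluates as $(-1)^{\hght(\lambda/[a,b])+1}$ when $\lambda/[a,b]$ is a border strip of size $k$ and $0$ otherwise. Assembling the pieces gives
\[
a_{\lambda,k} \;=\; \frac{2}{k}\sum_{[a,b]\vdash n-k}(a-b+1)\,(-1)^{\hght(\lambda\setminus[a,b])+1},
\]
the sum running over all $[a,b]\vdash n-k$ for which $\lambda\setminus[a,b]$ is a border strip of size $k$ (and being $0$ when no such $[a,b]$ exists), which is the claimed formula. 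The only genuinely nontrivial input is the Schur--Weyl identification above; the remaining care is in matching the classical Murnaghan--Nakayama sign convention (rows of the strip minus one) to the paper's convention that $\hght$ counts the number of rows, and in handling the boundary case $b=0$ uniformly.
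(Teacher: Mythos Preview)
Your proof is correct and follows the same overall architecture as the paper: establish the factorisation $\ch(\alpha_k 2^{\alpha}) = \frac{2}{k}\,p_k\cdot \ch_{S_{n-k}}(2^{\alpha})$, identify the second factor as $\sum_{[a,b]\vdash n-k}(a-b+1)S_{[a,b]}$, and then apply the Murnaghan--Nakayama rule (Stanley, Theorem~7.17.1) to expand $p_k S_{[a,b]}$ in Schur functions.

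Where you differ is in the justification of the two ingredients. For the factorisation, the paper works via a formula from \cite{AK} expressing $\ch(f)$ through averages over Young subgroups $T_\lambda$ and does a hands-on cycle count inside each $T_\lambda$; your route through the power-sum expansion $\ch(f)=\sum_\mu z_\mu^{-1}f(\mu)p_\mu$ and the identity $z_\mu = k\,\alpha_k(\mu)\,z_{\mu'}$ is more direct and arguably cleaner. For the identification of $\ch(2^{\alpha})$, the paper writes it first in the monomial basis $M_\lambda$ (using that $\sum_{\pi\in S_m}2^{\alpha(\pi)}=(m+1)!$), rewrites as $\sum_i S_{[i]}S_{[n-i]}$, and applies Pieri's rule; your Schur--Weyl argument (the $S_{n-k}$-character of $(\mathbb{C}^2)^{\otimes(n-k)}$ is $\pi\mapsto 2^{\alpha(\pi)}$, and the multiplicity of $V_{[a,b]}$ is the $\mathrm{GL}_2$-dimension $a-b+1$) is shorter and more conceptual. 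One small omission: the theorem is stated with a single $[a,b]$, so you should note that at most one two-row $[a,b]\subset\lambda$ can leave a connected size-$k$ border strip, collapsing your sum to a single term; the paper remarks this is easy but does not spell it out either.
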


It is easy to check that $a$ and $b$ are uniquely defined by $\lambda$
and $k$, if they exist. Of course, the notation $[a,b]\dashv n-k$
i.e.\ $[a,b]$ is a partition of $n-k$, is nothing but $a\ge b\ge0$
and $a+b=n-k$. We note as a corollary of theorem \ref{thm:nokronecker}
that the Fourier transform is supported on diagrams of the form $[a,b,c,1^{d}]$.

Let us sketch how theorem \ref{thm:precise} follows from theorem
\ref{thm:nokronecker}. Define the laplacian $\Delta:S_{n}\to\mathbb{R}$
by
\begin{equation}
\Delta=\sum_{i\sim j}\nabla_{ij}\qquad\nabla_{ij}(\pi)=\begin{cases}
1 & \pi=\text{id}\\
-1 & \pi=(ij)\\
0 & \text{otherwise,}
\end{cases}\label{eq:Lap}
\end{equation}
where id is the identity permutation; and where $i\sim j$ means that
$i$ and $j$ are neighbours in the interaction graph $G$. Then $p_{t}=e^{-t\Delta}$
where the exponentiation is in the convolution algebra over $\Sym_{n}$
(or in the group ring $\mathbb{R}[\Sym_{n}]$, which is the same).
Since Fourier transform translates convolution into product we also
get $\widehat{p_{t}}(\lambda)=e^{-t\widehat{\Delta}(\lambda)}$ where
this time this is simply exponentiation of matrices. By (\ref{eq:Z by Parseval})
we get
\[
Z\stackrel{\textrm{{(\ref{eq:Z by Parseval})}}}{=}\sum_{R}\frac{d_{R}}{n!}\Tr(\widehat{2^{\alpha}}(R)\widehat{p_{t}}(R)^{*})\stackrel{\textrm{(\ref{eq:2alpha at})}}{=}\sum_{b=0}^{n/2}(n-2b+1)\Tr(\widehat{p_{t}}([n-b,b])).
\]
Denote by $\rho_{1}(\lambda),\dotsc,\rho_{d_{\lambda}}(\lambda)$
the eigenvalues of $\widehat{\Delta}(\lambda)$ and get
\begin{align*}
Z & =\sum_{b=0}^{n/2}(n-2b+1)\Tr(\exp(-t\widehat{\Delta}([n-b,b])))\\
 & =\sum_{b=0}^{n/2}(n-2b+1)\sum_{i=1}^{d_{[n-b,b]}}\exp(-t\rho_{i}([n-b,b])).
\end{align*}
A similar formula holds for $m$,
\[
m^{2}=\frac{1}{Z}\sum_{k=1}^{n}k^{2}\sum_{\lambda\dashv n}a_{\lambda,k}\sum_{i=1}^{d_{\lambda}}\exp(-t\rho_{i}(\lambda))
\]
where $a_{\lambda,k}$ are given by theorem \ref{thm:nokronecker}.

The calculation so far was for any $G$. We now assume $G$ is the
complete graph. In this case, $\Delta$ is in the center of the group
ring of $\Sym_{n}$, so by Schur's lemma, $\rho_{1}(\lambda)=\rho_{2}(\lambda)=\dotsb=\rho_{d_{\lambda}}(\lambda)$,
and we simplify our formulas writing
\[
Z=\sum_{b=0}^{n/2}d_{[n-b,b]}\rho([n-b,b])\qquad m=\sum_{k}k^{2}\sum_{\lambda}a_{\lambda,k}d_{\lambda}\exp(-t\rho(\lambda))
\]
where $\rho(\lambda)$ denotes the common value. Next, $\rho(\lambda)$
can be calculated by examining the trace of $\Delta$, leading to
\begin{equation}
\rho(\lambda)=\binom{n}{2}\left(1-\frac{\chi_{\lambda}\left((12)\right)}{d_{\lambda}}\right)\label{eq:rho from char ratio}
\end{equation}
where $\chi_{\lambda}$ is the character of $R_{\lambda}$ i.e.\ $\chi_{\lambda}(\sigma)=\Tr R_{\lambda}(\sigma)$.
The value $\frac{\chi_{\lambda}\left((12)\right)}{d_{\lambda}}$ is
called the \emph{character ratio} and has a formula, mainly due to
Frobenius, which is quoted in \cite{DS}. Thus we get explicit formulas
for $Z$ and $m$.

\subsubsection{\label{subsec:algebro-analytic}}

We end this part of the introduction with a few remarks on the case
where the interaction graph is not the complete graph. Here the main
difficulty is to learn something about $\widehat{p_{t}}$, or equivalently
about $\widehat{\Delta}$. Since we eventually are only interested
in the trace of $e^{-t\widehat{\Delta}}$, we see that we need to
understand the eigenvalues of $\widehat{\Delta}(\lambda)$, and not
for any $\lambda,$ but just for $\lambda$ where the coefficients
in theorem \ref{thm:nokronecker} are non-zero, i.e.\ for $\lambda$
of the form $[a,b,c,1^{d}]$. Denote the eigenvalues which correspond
to a given graph $G$ and diagram $\lambda$ by $\rho_{i}(\lambda;G)$,
ordered in increasing order.

Let us mention two interesting bounds on these eigenvalues. The first
is a bound for the minimum eigenvalue,
\begin{equation}
\rho_{1}(\lambda;\{1,\dotsc,l\}^{d})\apprge\frac{1}{nl^{2}}\rho_{1}(\lambda;K_{n}).\label{eq:min eigen}
\end{equation}
(Of course, $\rho(\lambda;K_{n})$ is given by (\ref{eq:rho from char ratio})).
This follows from a comparison argument (an operator version of the
classic multi-commodity flow argument). See \cite{DSC93},\cite[\S 5]{CGS15}
or \cite{AK19}, and see also \cite{SST} for more about the minimal
eigenvalue. (In \cite{AK19} we demonstrate an example where the information
about the minimal eigenvalue is enough to prove the existence of a
phase transition.) Probably, the estimate (\ref{eq:min eigen}) is
tight up to the value of the constant. A second fact, known only for
two-rows Young diagrams, is that the statistics of the $\rho_{i}([n-k,k])$
is the same as that of sums of $k$-tuples of $\rho_{i}([n-1,1])$,
which are simply the eigenvalues of the Laplacian on the interaction
graph $G$ \cite{T10}. Thus there is good control of two scales,
the smallest and the largest, and what is missing to prove the conjecture
is control of the eigenvalues in intermediate scales.

\subsection{\label{subsec:Frobenius}The Frobenius map}

We will now delve a little deeper into the algebraic aspects of this
work, namely the proof of theorem \ref{thm:nokronecker}. Before starting,
let us make a remark on one natural approach to prove theorem \ref{thm:nokronecker}
which we could not make work. Recall that we wish to calculate $\widehat{\alpha_{k}2^{\alpha}}$
or, alternatively, to present $\alpha_{k}2^{\alpha}$ as a combination
of irreducible characters. As already mentioned, $\widehat{2^{\alpha}}$
is supported on Young diagrams of the form $[a,b]$ (\ref{eq:2alpha at}).
Moreover, $\widehat{\alpha_{k}}$, as shown in \cite{AK}, is supported
on Young diagrams of the form $[a,b,1^{c}]$. It may seem natural
to try and deduce $\widehat{\alpha_{k}2^{\alpha}}$ directly from
these decompositions. However, the problem of writing the product
(also known as the Kronecker product) of two irreducible characters
as a linear combination of irreducible characters is highly nontrivial:
It has been studied for more than eighty years. Many partial results
are known, but not one suitable for this case. In \cite{Rosas} Rosas
found the decomposition of a Kronecker product for a two-row diagram
with both a hook-shaped diagram (i.e.\ a diagram of the form $[a,1^{b}]$)
and with a second two-row diagram. This falls short of the case in
question. Our proof thus takes a different route, which we now sketch.

As in \cite{AK}, we express the decomposition of $\widehat{\alpha_{k}2^{\alpha}}$
as a sum of irreducible characters via the Frobenius characteristic
map $\ch$. Let us recall this classical object. The Frobenius map
is a function from the space of class functions on $\Sym_{n}$ to
the ring of symmetric functions, which maps $\chi_{\lambda}$, the
character of the representation $\lambda$, to the Schur polynomial
$S_{\lambda}$. See \cite[Definition 7.10.1 or \S\S 7.10, 7.15]{Stanley2}
for the Schur polynomials. We will prove that
\[
\ch(\alpha_{k}2^{\alpha})=u_{k}\cdot\ch(2^{\alpha})
\]
where $u_{k}=\sum_{i}x_{i}^{k}$. Consequently, the decomposition
of $u_{k}\ch(2^{\alpha})$ to Schur polynomials can be obtained from
that of $\ch(2^{\alpha})$ by a variant of the Murnaghan-Nakayama
rule. Theorem \ref{thm:nokronecker} follows as a consequence.

Another point of the calculation we wish to stress is the use of \emph{hook
numbers}. The hook numbers of a partition $n\vdash\lambda$ are defined
by 
\[
\lambda^{(i)}=\lambda_{i}+\hght(\lambda)-i.
\]
It turns out that all of $a_{\lambda}$, $d_{\lambda}$ and, in the
mean-field case, $\rho(\lambda)$ can be conveniently expressed in
terms of the hook numbers of $\lambda$. This change of variables
yields a considerable simplification of the expression for $\mathbb{E}\left(\alpha_{k}\left(\pi(t)\right)2^{\alpha(\pi(t))}\right)$.
We will evaluate the sums in this expression (in a similar way to
the calculation in \cite{BK}), and obtain a closed form expression
in terms of some incomplete Beta integrals.

\subsection{The quantum model\label{subsec:quantum}}

We now return to the starting point of this paper, the quantum Heisenberg
ferromagnet, and describe it in detail. We are given $n$ interacting
particles. The interaction scheme is defined by a graph $G$ on $n$
vertices, and two particles interact if there is an edge between the
corresponding vertices. Each particle is spin-$\frac{1}{2}$ and interactions
are spin interactions. Hence each particle corresponds to a vector
in $\mathbb{C}^{2}$ and the state space of the entire system is $\otimes_{i=1}^{n}\mathbb{C}^{2}$.
Recall the \emph{Pauli matrices} $\sigma^{x}=\left(\begin{smallmatrix}0 & 1\\
1 & 0
\end{smallmatrix}\right)$, $\sigma^{y}=\left(\begin{smallmatrix}0 & -i\\
i & 0
\end{smallmatrix}\right)$ and $\sigma^{z}=\left(\begin{smallmatrix}1 & 0\\
0 & -1
\end{smallmatrix}\right)$ and define the Pauli matrix at particle $i$ by 
\[
\sigma_{i}^{x}=\underbrace{I\otimes\dotsb\otimes I}_{i-1\text{ times}}\otimes\sigma^{x}\otimes\underbrace{I\otimes\dotsb\otimes I}_{n-i\text{ times}}
\]
and ditto for $\sigma^{y}$ and $\sigma^{z}$. The \emph{Hamiltonian}
is now 
\[
H=-{\textstyle \frac{1}{4}}\sum_{i\sim j}\sigma_{i}^{x}\sigma_{j}^{x}+\sigma_{i}^{y}\sigma_{j}^{y}+\sigma_{i}^{z}\sigma_{j}^{z}
\]
where the notation $i\sim j$ means that $i$ and $j$ are neighbours
in the interaction graph $G$. The partition function at temperature
$T$ is 
\[
Z=Z_{n}(\beta)=\Tr\exp(-\beta H)
\]
where $\beta=1/T$. The expected square magnetisation is
\begin{equation}
m_{n}^{2}(\beta)=\frac{1}{Z_{n}(\beta)}\Tr\left(\exp(-\beta H)\Big(\sum_{i}\sigma_{i}^{z}\Big)^{2}\right).\label{eq:mquantum}
\end{equation}
The result of \cite{Toth} imply that the $m$ defined above is exactly
the one defined in (\ref{eq:Zmalpha}), with $t=\frac{1}{2}\beta$,
i.e.\ the inverse temperature in the quantum model becomes the time
in the interchange model (the quantum and interchange definitions
of $Z$ differ by a constant).

This might be a good point to indicate a small issue regarding the
definition of magnetisation. The magnetisation as defined in \cite{Toth}
is the \emph{residual magnetisation}, i.e., an external field $h$
is applied (mathematically, the term $h\sum\sigma_{i}^{z}$ is added
to the Hamiltonian), the magnetisation is calculated as a function
of $h$, $n$ is taken to infinity and then $h$ is taken to $0$.
This residual magnetisation (denote it by $m^{*}$) can also be described
by the interchange process, 
\begin{equation}
m^{*}(\beta)=\frac{1}{2}\lim_{M\to\infty}\lim_{n\to\infty}\frac{1}{Z_{n}(\beta)}\sum_{k=M+1}^{n}\frac{k}{n}\mathbb{E}(\alpha_{k}2^{\alpha(\pi(\beta))})\label{eq:mstar}
\end{equation}
assuming the limits exist, see \cite[(5.2)]{Toth}. (unfortunately,
while the formula (\ref{eq:mstar}) for $m^{*}$ can be found in \cite{Toth}
explicitly, the equivalent formula for $m$, i.e.\ the equivalence
of our two definitions for $m$, (\ref{eq:Zmalpha}) and in (\ref{eq:mquantum})
is not written as such in \cite{Toth}. Nevertheless the argument
is sufficiently similar to the arguments of \cite{Toth} that we feel
we may omit it).

It is easy to see that $m(t)\apprge n$ as $n\to\infty$ implies that
$m^{*}(t)>0$, since the first says that there are cycles of linear
size, while the latter says that there is some ``mass'' in cycles
of size larger than constant. As this is quite standard, we postpone
the proof to the appendix.

\subsection{Recap of notation}

Throughout this paper, we will analyse the continuous time interchange
process on $n$ particles, with respect to the complete graph.  See
\cite{AK} for more details on the interchange process. We denote
by $\pi(t)$ the permutation at time $t\geq0$.

For a permutation $\pi\in S_{n}$, we denote by $\alpha_{k}(\pi)$
the number of cycles of length $k$ in $\pi$, and by $\alpha(\pi)=\sum_{i}\alpha_{i}(\pi)$
the total number of cycles in $\pi$. We will denote by $c_{i}(\pi)$
the size of the cycle of $\pi$ containing $i$.

For a partition $n\vdash\lambda$, we denote by $R_{\lambda}$ the
irreducible representation of $\Sym_{n}$ associated with $\lambda$,
by $d_{\lambda}$ its dimension, and by $\chi_{\lambda}$ the corresponding
irreducible character. We denote by $a_{\lambda,k}$ the numbers from
theorem \ref{thm:nokronecker}. When $k$ is clear from context, we
will often remove it, writing simply $a_{\lambda}$.

We let $\ch$ be the Frobenius characteristic map. It is a function
from the space of class function on $\Sym_{n}$ to the ring of symmetric
functions, which maps $\chi_{\lambda}$ to the Schur polynomial $S_{\lambda}$.

\section{The character decomposition}

The main step in the proof of theorem \ref{thm:nokronecker} is the
calculation of $\ch(\alpha_{k}2^{\alpha})$. We state it as a result.
\begin{thm}
\label{thm:decomposition}We have
\begin{enumerate}
\item \label{enu:just2alpha}
\[
\ch(2^{\alpha})=\sum_{n\vdash[a,b]}(a-b+1)S_{[a,b]}
\]
where $b$ is allowed to be $0$, in which case $[a,b]$ means $[a]$.
In other words, we sum over all couples $a$, $b$ such that $a\ge b\ge0$
and $a+b=n$.
\item \label{enu:alphak2alpha}For $1\leq k\leq n$,
\[
\ch(\alpha_{k}2^{\alpha})=\frac{2}{k}u_{k}\sum_{n-k\vdash[a,b]}(a-b+1)S_{[a,b]}
\]
where $u_{k}=\sum_{i}x_{i}^{k}$.
\end{enumerate}
\end{thm}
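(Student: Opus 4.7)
The plan is to obtain both parts from a single generating-function calculation. For formal parameters $(c_j)$, the function $\prod_j c_j^{\alpha_j}$ is a class function on $\Sym_n$, and the formula $\ch(f)=\sum_{\mu\dashv n}\frac{f(\mu)}{z_\mu}p_\mu$ (with $z_\mu=\prod_j j^{\alpha_j(\mu)}\alpha_j(\mu)!$ and $p_\mu=\prod_j p_j^{\alpha_j(\mu)}$) combined with the exponential formula gives
\[
\sum_{n\ge 0}\ch_n\Bigl(\prod_j c_j^{\alpha_j}\Bigr)=\sum_{\mu}\prod_j\frac{(c_jp_j/j)^{\alpha_j(\mu)}}{\alpha_j(\mu)!}=\exp\Bigl(\sum_j\frac{c_jp_j}{j}\Bigr).
\]
Specialising $c_j\equiv 2$ and using the classical identity $\log H(t)=\sum_j p_j t^j/j$, where $H(t)=\prod_i(1-x_it)^{-1}=\sum_n h_n t^n$, collapses this to $\sum_n\ch_n(2^\alpha)=H(1)^2=\sum_n\sum_{i+j=n}h_ih_j$. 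Thus part~(\ref{enu:just2alpha}) reduces to expanding $\sum_{i+j=n}h_ih_j$ in the Schur basis.

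For that expansion I would use the two-row Jacobi--Trudi identity $S_{[a,b]}=h_ah_b-h_{a+1}h_{b-1}$ (with the convention $h_{-1}=0$, so that $S_{[a]}=h_a$), which iterates to $h_ah_b=\sum_{c=0}^{b}S_{[a+c,\,b-c]}$ for $a\ge b\ge 0$. Substituting this into $\sum_{i+j=n}h_ih_j$ and counting multiplicities gives the claim: each $S_{[a,b]}$ with $a\ge b\ge 0$ and $a+b=n$ is produced by precisely the pairs $(i,n-i)$ with $n-a\le i\le a$, so its multiplicity equals $a-b+1$. (A clean way to bookkeep is to pair $(i,j)$ with $(j,i)$ and treat the middle term $h_{n/2}^2$ separately when $n$ is even.) This proves part~(\ref{enu:just2alpha}).

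Part~(\ref{enu:alphak2alpha}) follows by differentiation. Since $\alpha_k\cdot 2^\alpha=2\cdot\partial_{c_k}\prod_j c_j^{\alpha_j}\bigr|_{c\equiv 2}$, differentiating the generating function above in $c_k$ and evaluating at $c\equiv 2$ yields
\[
\sum_{n\ge 0}\ch_n(\alpha_k\,2^\alpha)=\frac{2p_k}{k}\exp\Bigl(2\sum_j p_j/j\Bigr)=\frac{2u_k}{k}\,H(1)^2.
\]
As $u_k=p_k$ is homogeneous of degree $k$, extracting the degree-$n$ part and applying part~(\ref{enu:just2alpha}) to the degree-$(n-k)$ component of $H(1)^2$ gives the formula in part~(\ref{enu:alphak2alpha}). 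The only mildly delicate step is the multiplicity count $a-b+1$ in the middle paragraph; once that is done, the rest is formal manipulation of generating functions, and I expect the full proof to be quite short.
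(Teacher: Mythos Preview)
Your proof is correct, and while it reaches the same intermediate identity $\ch_n(2^\alpha)=\sum_{i+j=n}h_ih_j$ as the paper, the route is genuinely different. The paper works through the monomial expansion $\ch(f)=\sum_\lambda\bigl(\frac{1}{|T_\lambda|}\sum_{\pi\in T_\lambda}f(\pi)\bigr)M_\lambda$ (an average over Young subgroups), computes $\frac{1}{|T_\lambda|}\sum_{\pi\in T_\lambda}2^{\alpha(\pi)}=\prod_i(\lambda_i+1)$ using the Stirling generating polynomial, and then recognises $\sum_\lambda M_\lambda\prod_i(\lambda_i+1)$ as $\sum_i S_{[i]}S_{[n-i]}$ by a direct monomial count; the Schur expansion is then obtained by Pieri's rule. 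For part~(\ref{enu:alphak2alpha}) the paper again works in the Young-subgroup picture, explicitly summing $\alpha_k(\pi)2^{\alpha(\pi)}$ over $T_\lambda$ by decomposing over the choice of the $k$-cycle, which gives the factor $\frac{2}{k}$ and shifts $\lambda_i\mapsto\lambda_i-k$ in one coordinate; a second monomial comparison then produces the factor $u_k$.

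Your approach replaces both of these explicit combinatorial calculations by a single exponential generating function in the power-sum basis, from which the identity $\ch(2^\alpha)=\sum h_ih_{n-i}$ drops out immediately and part~(\ref{enu:alphak2alpha}) follows by differentiation in the auxiliary variable $c_k$. This is shorter and more systematic; the paper's approach, by contrast, is more hands-on and does not require introducing the formal parameters $c_j$ or invoking the identity $\log H(t)=\sum_j p_jt^j/j$. The final Schur expansion step is essentially the same in both (your iterated Jacobi--Trudi is Pieri's rule in this two-row case), and your multiplicity count $a-b+1$ is correct: $S_{[a,b]}$ appears in $h_ih_{n-i}$ exactly when $b\le i\le a$.
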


\begin{proof}
[Proof of (\ref{enu:just2alpha})]The polynomial $\sum_{\pi\in S_{n}}x^{\alpha(\pi)}$
is the generating function of the Stirling numbers of the first kind,
and is equal to $\prod_{i=0}^{n-1}(x+i)$ (see \cite[proposition 1.3.4]{Stanley1}).
In particular, we have $\sum_{\pi\in S_{n}}2^{\alpha(\pi)}=(n+1)!$.
For any $n\vdash\lambda=[\lambda_{1},\lambda_{2},\dotsc,\lambda_{r}]$
(with $\lambda_{1}\geq\dotsb\geq\lambda_{r}>0)$, consider the corresponding
partition of $\{1,2,\dotsc,n\}$ to sets $A_{\lambda}^{i}$ of sizes
$\lambda_{1},\dotsc,\lambda_{r}$:
\[
A_{\lambda}^{i}=\mathbb{Z}\cap\Big(\sum_{j=1}^{i-1}\lambda_{j},\sum_{j=1}^{i}\lambda_{j}\Big]
\]
and let $T_{\lambda}\cong\prod\Sym_{\lambda_{i}}$ be the group of
permutations in $\Sym_{n}$ preserving all the sets $A_{\lambda}^{i}$.
Let $M_{\lambda}$ be the sum of all monomials which can be obtained
from $\prod x_{i}^{\lambda_{i}}$ by a permutation of the variables
$\left\{ x_{i}\right\} $. We will use the following formula from
\cite[lemma 2]{AK} (valid for any class function $f$ on $\Sym_{n}$):
\begin{equation}
\ch(f)=\sum_{n\vdash\lambda}\left(\frac{1}{|T_{\lambda}|}\sum_{\pi\in T_{\lambda}}f(\pi)\right)M_{\lambda}\label{eq:ChFormula}
\end{equation}
It follows that 
\[
\ch(2^{\alpha})=\sum_{n\vdash\lambda}M_{\lambda}\prod_{i}(\lambda_{i}+1)
\]
Clause (\ref{enu:just2alpha}) will thus be proved once we show that
\begin{equation}
\sum_{n\vdash\lambda}M_{\lambda}\prod_{i}(\lambda_{i}+1)=\sum_{n\vdash[a,b]}(a-b+1)S_{[a,b]}\label{eq:Pieri}
\end{equation}
To see (\ref{eq:Pieri}) we first claim that 
\[
\sum_{n\vdash\lambda}M_{\lambda}\prod_{i}(\lambda_{i}+1)=\sum_{i=0}^{n}S_{[i]}S_{[n-i]}
\]
Indeed, this is proved by comparing the coefficient of $\prod x_{i}^{\lambda_{i}}$
in both sides: It is clearly $\prod(\lambda_{i}+1)$ on the left hand
side, and it is also $\prod(\lambda_{i}+1)$ on the right hand size,
since $S_{[i]}$ is the sum of all monomials of degree $i$ (see e.g.
\cite{AK} for a proof) and $\prod x_{i}^{\lambda_{i}}$ can be decomposed
in exactly $\prod(\lambda_{i}+1)$ ways as a product of two monomials. 

Finally, by Pieri's rule (see \cite[theorem 7.15.7 on page 339]{Stanley2}),
$S_{[i]}S_{[n-i]}$ is the sum of Schur polynomials of two-line and
one-line diagrams, in which $S_{[a,b]}$ is a summand exactly when
$b\leq i\leq a$. This shows (\ref{eq:Pieri}) and hence clause (\ref{enu:just2alpha})
of the theorem.\phantom\qedhere
\end{proof}

\begin{proof}
[Proof of (\ref{enu:alphak2alpha})]For any $\pi\in T_{\lambda}$,
let $\alpha_{k}^{(i)}(\pi)$ be the number of $k$-cycles in the $\Sym_{\lambda_{i}}$-component
of $\pi$ i.e.\ in the restriction of $\pi$ to $A_{\lambda}^{i}$.
Clearly, 
\[
\sum_{\pi\in T_{\lambda}}\alpha_{k}(\pi)2^{\alpha(\pi)}=\sum_{\pi\in T_{\lambda}}\sum_{i:\lambda_{i}\geq k}\alpha_{k}^{(i)}(\pi)2^{\alpha(\pi)}=\sum_{i:\lambda_{i}\geq k}\sum_{\pi\in T_{\lambda}}\alpha_{k}^{(i)}(\pi)2^{\alpha(\pi)}
\]
To compute the inner sum, let us look at all the $k$-size subsets
$A\subseteq A_{\lambda}^{i}$, and for each such $A$ let $T_{\lambda,A}$
be the set of permutations $\pi\in T_{\lambda}$ that preserve $A$
and $\pi|_{A}$ is a $k$-cycle. Since there are $(k-1)!$ $k$-cycles
in $\Sym_{k}$, we have
\[
\sum_{\pi\in T_{\lambda,A}}2^{\alpha(\pi)}=2(k-1)!\left(\prod_{j\neq i}(\lambda_{j}+1)!\right)(\lambda_{i}-k+1)!
\]
Summing over all the possible $A$'s, we get 
\[
\sum_{\pi\in T_{\lambda}}\alpha_{k}^{(i)}(\pi)2^{\alpha(\pi)}=\binom{\lambda_{i}}{k}2(k-1)!\left(\prod_{j\neq i}(\lambda_{j}+1)!\right)(\lambda_{i}-k+1)!
\]
Summing over $i$:
\[
\sum_{\pi\in T_{\lambda}}\alpha_{k}(\pi)2^{\alpha(\pi)}=2\sum_{i:\lambda_{i}\geq k}\binom{\lambda_{i}}{k}\left(\prod_{j\neq i}(\lambda_{j}+1)!\right)(\lambda_{i}-k+1)!(k-1)!
\]
Hence
\begin{align*}
\frac{1}{|T_{\lambda}|}\sum_{\pi\in T_{\lambda}}\alpha_{k}(\pi)2^{\alpha(\pi)} & =2\sum_{i:\lambda_{i}\geq k}\binom{\lambda_{i}}{k}\left(\prod_{j\neq i}(\lambda_{j}+1)\right)\frac{(\lambda_{i}-k+1)!(k-1)!}{\lambda_{i}!}\\
 & =\frac{2}{k}\sum_{i:\lambda_{i}\geq k}\prod_{j}\begin{cases}
(\lambda_{j}+1) & j\neq i\\
(\lambda_{j}+1-k) & j=i
\end{cases}
\end{align*}
We conclude by (\ref{eq:ChFormula}), that
\begin{align*}
\ch(\alpha_{k}2^{\alpha}) & =\frac{2}{k}\sum_{n\vdash\lambda}M_{\lambda}\sum_{i:\lambda_{i}\geq k}\prod_{j}\begin{cases}
(\lambda_{j}+1) & j\neq i\\
(\lambda_{j}+1-k) & j=i
\end{cases}\\
 & =\frac{2}{k}u_{k}\sum_{n-k\vdash\lambda}M_{\lambda}\prod_{j}(\lambda_{j}+1)
\end{align*}
The last equality follows by comparing the coefficient of $\prod x_{i}^{\lambda_{i}}$
on both sides. The result now follows from (\ref{eq:Pieri}).
\end{proof}
The formula in theorem \ref{thm:decomposition} expresses $\ch(\alpha_{k}2^{\alpha})$
as a product of $u_{k}$ and a linear combination of Schur polynomials.
We will use a Murnaghan-Nakayama type formula to further simplify
the expression and present it as a linear combination of Schur polynomials.
Recall from the introduction the notions of a skew diagram and a border
strip, and the notation $\hght(\mu)$. Let $E_{k}(\mu)$ be the set
of Young diagrams $\lambda$ such that $\lambda\supseteq\mu$ and
$\lambda\backslash\mu$ is a border strip of size $k$. We quote the
following formula from Stanley's \emph{Enumerative Combinatorics}
(see \cite[Theorem 7.17.1]{Stanley2}, but note that Stanley's $\hght$
differs from ours by 1):

\begin{equation}
u_{k}S_{\mu}=\sum_{\lambda\in E_{k}(\mu)}(-1)^{\hght(\lambda\backslash\mu)+1}S_{\lambda}\label{eq:StanleyFormula}
\end{equation}
We therefore have
\[
\ch(\alpha_{k}2^{\alpha})=\frac{2}{k}\sum_{n-k\vdash\mu=[a,b]}(a-b+1)\sum_{\lambda\in E_{k}(\mu)}(-1)^{\hght(\lambda\backslash\mu)+1}S_{\lambda}
\]
concluding the proof of theorem \ref{thm:nokronecker}.\qed

\section{The expectation of \texorpdfstring{$\alpha_{k}2^{\alpha}$}{alpha k times 2 to the alpha}
at time \texorpdfstring{$t$}{t}}

Let us see how the decomposition in theorem \ref{thm:nokronecker}
helps us calculate $\mathbb{E}\big(\alpha_{k}\left(\pi(t)\right)\brmul2^{\alpha(\pi(t))}\big)$.
We have the following general lemma:
\begin{lem}
\label{lem:exp_char}Let $\chi_{\lambda}$ be an irreducible character
of $S_{n}$, $R_{\lambda}$ the corresponding representation, $B=\left(b_{ij}\right)$
a symmetric $n\times n$ matrix with positive entries outside the
diagonal, $\left(\pi^{B}(t)\in S_{n}\right)_{t\geq0}$ the continuous
time interchange process with rates $(b_{ij})$, and $\rho_{1},\dotsc,\rho_{d}$
the eigenvalues of $\sum_{i<j}b_{ij}(\text{id}-R_{\lambda}((ij)))$.
Then 
\[
\mathbb{E}(\chi_{\lambda}(\pi^{B}(t)))=\sum_{i}e^{-t\rho_{i}}
\]
\end{lem}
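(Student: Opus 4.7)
The plan is to identify the expectation as the trace of the Fourier transform of the law of $\pi^B(t)$, and then use the fact that Fourier transform diagonalises convolution to turn the matrix exponential into a sum over its eigenvalues.

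First I would introduce the law $p_t(\pi)=\Pr(\pi^B(t)=\pi)$, viewed as an element of the group algebra $\mathbb{R}[\Sym_n]$. By the definition of the interchange process with rates $(b_{ij})$, $p_t$ satisfies the forward equation $\partial_t p_t = -\Delta_B * p_t$ with $p_0=\delta_{\mathrm{id}}$, where $\Delta_B = \sum_{i<j} b_{ij}\bigl(\delta_{\mathrm{id}}-\delta_{(ij)}\bigr)$ and $*$ denotes convolution in $\mathbb{R}[\Sym_n]$ (exactly as in the paper's equation \eqref{eq:Lap}, but with weights $b_{ij}$). Consequently $p_t = e^{-t\Delta_B}$ in the convolution algebra.

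Next I would take the non-commutative Fourier transform at the irreducible representation $R_\lambda$. Since the Fourier transform is an algebra homomorphism from $(\mathbb{R}[\Sym_n],*)$ to $(\mathrm{End}(V_\lambda),\cdot)$, and exponentials in Banach algebras are preserved under such homomorphisms,
\[
\widehat{p_t}(\lambda)=\exp\bigl(-t\,\widehat{\Delta_B}(\lambda)\bigr),
\qquad
\widehat{\Delta_B}(\lambda)=\sum_{i<j}b_{ij}\bigl(I_\lambda-R_\lambda((ij))\bigr).
\]
By hypothesis the eigenvalues of $\widehat{\Delta_B}(\lambda)$ are exactly $\rho_1,\dotsc,\rho_d$, so those of $\widehat{p_t}(\lambda)$ are $e^{-t\rho_1},\dotsc,e^{-t\rho_d}$.

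Finally I would compute
\[
\mathbb{E}\bigl(\chi_\lambda(\pi^B(t))\bigr)
=\sum_{\pi\in\Sym_n}p_t(\pi)\,\Tr R_\lambda(\pi)
=\Tr\!\Bigl(\sum_{\pi}p_t(\pi)R_\lambda(\pi)\Bigr)
=\Tr\widehat{p_t}(\lambda)
=\sum_{i=1}^{d}e^{-t\rho_i},
\]
using linearity of the trace and the identity $\Tr\exp(A)=\sum e^{\mu_i}$ for eigenvalues $\mu_i$ of $A$. The only thing requiring any care is the bookkeeping of conventions, namely verifying that the generator acts by left convolution with $\Delta_B$ given the paper's convention that $\pi(t)$ is updated to $(ij)\pi(t)$, but this is exactly the setup already used implicitly in the derivation of $p_t=e^{-t\Delta}$ in the introduction, so no real obstacle remains.
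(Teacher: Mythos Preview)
Your proof is correct and is precisely the standard argument the paper has in mind: the paper's own proof of this lemma merely cites \cite[lemma 5]{AK} without repeating the details, and the computation it alludes to (already sketched in the introduction around the identity $\widehat{p_t}(\lambda)=e^{-t\widehat{\Delta}(\lambda)}$) is exactly what you wrote out.
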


\begin{proof}
The arguments in the proof of \cite[lemma 5]{AK} hold in this generality.
For the sake of brevity, we will not repeat them.
\end{proof}
Let us get back to the analysis of the mean field case, $\left(\pi(t)\right)_{t\geq0}$,
for which the rates are $a_{ij}=1$. Let $n\vdash\lambda$. As mentioned
in \S \ref{subsec:Frobenius}, the eigenvalues of $\widehat{\Delta}(\lambda)$
are all equal to 
\[
\rho(\lambda)=\binom{n}{2}-\binom{n}{2}\frac{\chi_{\lambda}\left(\left(12\right)\right)}{d_{\lambda}}.
\]

\begin{defn}
For any box $(i,j)$ of the diagram $\lambda$, the \emph{content
}of the box is defined by
\[
c((i,j))=i-j
\]
Here and below, $i$ is the row index and $j$ is the position inside
the row, both starting from $1$.
\end{defn}

\begin{lem}
Let $n\vdash\lambda$. We have
\[
\binom{n}{2}\frac{\chi_{\lambda}\left((12)\right)}{d_{\lambda}}=-\sum_{(i,j)\in\lambda}c((i,j))
\]
\end{lem}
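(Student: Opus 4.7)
My plan is to exploit the fact that $C:=\sum_{1\le i<j\le n}(ij)$ is a sum over a full conjugacy class, so it lies in the center of the group algebra $\mathbb{C}[\Sym_n]$. By Schur's lemma, $R_\lambda(C)$ is a scalar multiple of the identity, $R_\lambda(C)=c_\lambda I_\lambda$. Taking traces and using that all $\binom{n}{2}$ transpositions have the same character value, we obtain
\[
c_\lambda\, d_\lambda \;=\; \Tr R_\lambda(C) \;=\; \binom{n}{2}\chi_\lambda((12)),
\]
so the left-hand side of the lemma equals $c_\lambda$. The task is therefore to identify this scalar with $-\sum_{(i,j)\in\lambda}c((i,j))=\sum_{(i,j)\in\lambda}(j-i)$.

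For this I would invoke the Jucys--Murphy elements
\[
J_k\;=\;(1,k)+(2,k)+\dotsb+(k-1,k)\qquad(k=2,\dotsc,n),
\]
which obviously satisfy $C=J_2+\dotsb+J_n$. The classical spectral theorem for these elements (see e.g.\ the treatment via branching from $\Sym_{k-1}$ to $\Sym_k$ in \cite{JKbook}) says that in the Young seminormal basis of $V_\lambda$, indexed by standard Young tableaux $T$ of shape $\lambda$, each $J_k$ acts diagonally, multiplying the vector indexed by $T$ by the content $j-i$ of the box of $T$ containing the entry $k$. Taking traces,
\[
\Tr R_\lambda(J_k)\;=\;\sum_{T}\bigl(j_T(k)-i_T(k)\bigr),
\]
where $(i_T(k),j_T(k))$ is the cell of $T$ holding $k$. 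Summing over $k=2,\dotsc,n$, each tableau $T$ contributes the sum $\sum_{(i,j)\in\lambda}(j-i)$ over all boxes of its shape — a quantity independent of $T$ — so
\[
\Tr R_\lambda(C)\;=\;d_\lambda\sum_{(i,j)\in\lambda}(j-i)\;=\;-d_\lambda\sum_{(i,j)\in\lambda}c((i,j)).
\]
Combining this with the first displayed equation and dividing by $d_\lambda$ yields the lemma.

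The one nontrivial ingredient is the spectral description of the Jucys--Murphy elements on the seminormal basis; this is really the heart of the matter. It can be either quoted from the literature or established by an induction on $n$ using the branching rule $V_\lambda\big|_{\Sym_{n-1}}=\bigoplus_\mu V_\mu$, where $\mu$ ranges over diagrams obtained from $\lambda$ by removing a single cell: $J_n$ commutes with $\mathbb{C}[\Sym_{n-1}]$, hence acts by a scalar on each summand $V_\mu$, and a short computation of $\Tr R_\lambda(J_n)\big|_{V_\mu}$ via $\Tr R_\lambda(C_n)-\Tr R_\mu(C_{n-1})$ (with $C_m=\sum_{i<j\le m}(ij)$) shows the scalar is exactly the content of the removed cell. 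Granted this, the rest of the argument above is purely formal.
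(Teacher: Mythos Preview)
Your argument is correct and takes a genuinely different route from the paper's. The paper simply quotes the explicit Frobenius formula for the character ratio at a transposition (as recorded in \cite[lemma 7]{DS}),
\[
\binom{n}{2}\frac{\chi_\lambda((12))}{d_\lambda}=\frac{1}{2}\sum_i\bigl((\lambda_i-i)(\lambda_i-i+1)-i(i-1)\bigr),
\]
and then reduces this by elementary algebra to $\sum_{(i,j)\in\lambda}(j-i)$. Your approach instead identifies the left-hand side as the scalar by which the central element $C=\sum_{i<j}(ij)$ acts on $V_\lambda$, writes $C=\sum_k J_k$, and reads off that scalar from the Jucys--Murphy spectral theorem. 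The paper's route is shorter once the Frobenius formula is granted, but that formula is itself a nontrivial black box; your route is more conceptual and explains \emph{why} contents appear, at the cost of importing the spectral description of the $J_k$ (which, as you note, can be established by induction via branching). One small caveat: the reference \cite{JKbook} predates the modern Jucys--Murphy/Okounkov--Vershik viewpoint, so you may want to point to a source that actually states the diagonal action of the $J_k$ in the seminormal basis.
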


\begin{proof}
By the formula in \cite[lemma 7]{DS},
\begin{align*}
\binom{n}{2}\frac{\chi_{\lambda}\left((12)\right)}{d_{\lambda}} & =\frac{1}{2}\sum_{i=1}^{r}\left((\lambda_{i}-i)(\lambda_{i}-i+1)-i(i-1)\right)=\\
 & =\sum_{i=1}^{r}\frac{\lambda_{i}(\lambda_{i}+1)}{2}-\sum_{i=1}^{r}i\lambda_{i}=\\
 & =\sum_{(i,j)\in\lambda}j-\sum_{(i,j)\in\lambda}i=-\sum_{(i,j)\in\lambda}c((i,j)).\qedhere
\end{align*}
\end{proof}
We therefore have

\begin{equation}
\rho(\lambda)=\binom{n}{2}+\sum_{(i,j)\in\lambda}c((i,j))\label{eq:character_ratio}
\end{equation}
and (by theorem \ref{thm:nokronecker} and lemma \ref{lem:exp_char}),
\begin{equation}
\mathbb{E}\left(\alpha_{k}\left(\pi(t)\right)2^{\alpha(\pi(t))}\right)=\frac{2}{k}\sum_{n-k\vdash\mu=[a,b]}(a-b+1)\sum_{\lambda\in E_{k}(\mu)}(-1)^{ht(\lambda\backslash\mu)+1}d_{\lambda}e^{-t\rho(\lambda)}\label{eq:exp_formula}
\end{equation}

\section{The caterpillar\label{sec:The-caterpillar}}

For a Young diagram $\mu=[\mu_{1},\dotsc,\mu_{r}]$, we have denoted
by $E_{k}(\mu)$ the set of Young diagrams $\lambda$ which contain
$\mu$, and such that $\lambda\backslash\mu$ is a border strip of
size $k$. Here is for example the set $E_{3}([5,3]):$

\begin{figure}[H]
\hfill\subfloat{\ytableausetup{smalltableaux}
\begin{ytableau} 
*(white)  & *(white)  & *(white)  & *(white)  & *(white) \\
*(white)  & *(white) & *(white) \\
*(green) \\
*(green) \\
*(green) 
\end{ytableau} }\hfill{}\subfloat{\ytableausetup{smalltableaux}
\begin{ytableau} 
*(white)  & *(white)  & *(white)  & *(white)  &  *(white)\\
*(white)  & *(white)  &  *(white)\\
*(green) & *(green)\\
*(green) 
\end{ytableau} }\hfill{}\subfloat{\ytableausetup{smalltableaux}
\begin{ytableau} 
*(white)  & *(white)  & *(white)  & *(white) & *(white)   \\
*(white)  & *(white) & *(white)  \\
*(green) & *(green) & *(green)
\end{ytableau} }\hfill{}\subfloat{\ytableausetup{smalltableaux}
\begin{ytableau} 
*(white)  & *(white)  & *(white)  & *(white)  &  *(white) & *(green) & *(green) & *(green) \\
*(white) & *(white) & *(white)
\end{ytableau} }\hfill

\caption{$E_{3}([5,2])$}
\end{figure}
Pictorially, we like to think of $E_{k}(\mu)$ as the result of the
following process: We start with the diagram $[\mu_{1},\mu_{2},\dotsc,\mu_{r},1^{k}]$.
We view this diagram as the diagram $\mu$ with a caterpillar with
$k$ square segments, lying below it. The top square $(r+1,1)$ is
the head of the caterpillar. At each stage the caterpillar moves forward
one step. Each segment moves to the place occupied by the next one,
whereas the head moves either up or right, so that the caterpillar
clings to $\mu$. The caterpillar stops when it lies entirely in the
first row (i.e. when the diagram is $[\mu_{1}+k,\mu_{2},\dotsc,\mu_{r}]$.
Some of the diagrams we obtain are not Young diagrams, but the set
of Young diagrams obtained in this process is precisely $E_{k}(\mu)$.

\begin{figure}[H]
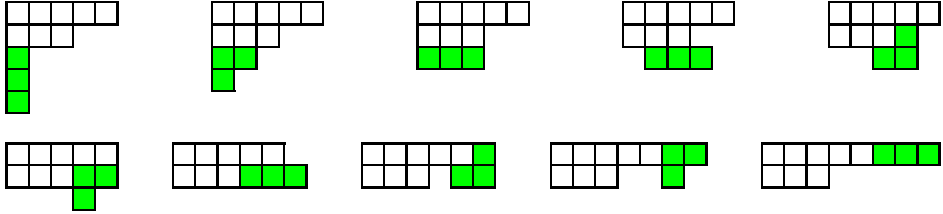

\subfloat{\ytableausetup{smalltableaux}
\begin{ytableau} 
*(white)  & *(white)  & *(white)  & *(white)  & *(white) \\
*(white)  & *(white)  & *(white) \\
*(green) \\
*(green) \\
*(green) 
\end{ytableau} }\hfill{}\subfloat{\ytableausetup{smalltableaux}
\begin{ytableau} 
*(white)  & *(white)  & *(white)  & *(white)  &  *(white)\\
*(white)  & *(white)  &  *(white)\\
*(green) & *(green)\\
*(green) 
\end{ytableau} }\hfill{}\subfloat{\ytableausetup{smalltableaux}
\begin{ytableau} 
*(white)  & *(white)  & *(white)  & *(white) & *(white)   \\
*(white)  & *(white) & *(white)  \\
*(green) & *(green) & *(green)
\end{ytableau} }\hfill{}\subfloat{\ytableausetup{smalltableaux}
\begin{ytableau} 
*(white)  & *(white)  & *(white)  & *(white) & *(white)  \\
*(white)  & *(white) & *(white)  \\
\none & *(green) & *(green) & *(green)
\end{ytableau} }\hfill{}\subfloat{\ytableausetup{smalltableaux}
\begin{ytableau} 
*(white)  & *(white)  & *(white)  & *(white) & *(white)  \\
*(white)  & *(white) & *(white) & *(green) \\
\none & \none & *(green) & *(green)
\end{ytableau} }

\subfloat{\ytableausetup{smalltableaux}
\begin{ytableau} 
*(white)  & *(white)  & *(white)  & *(white) & *(white)  \\
*(white)  & *(white) & *(white) & *(green) & *(green)\\
\none & \none & \none & *(green)
\end{ytableau} }\hfill{}\subfloat{\ytableausetup{smalltableaux}
\begin{ytableau} 
*(white)  & *(white)  & *(white)  & *(white)  & *(white)  \\
*(white)  & *(white)  & *(white) & *(green) & *(green) & *(green) \\
\end{ytableau} }\hfill{}\subfloat{\ytableausetup{smalltableaux}
\begin{ytableau} 
*(white)  & *(white)  & *(white)  & *(white)  & *(white) & *(green) \\
*(white)  & *(white)  & *(white) & \none & *(green) & *(green)  \\
\end{ytableau} }\hfill{}\subfloat{\ytableausetup{smalltableaux}
\begin{ytableau} 
*(white)  & *(white)  & *(white)  & *(white)  & *(white) & *(green)& *(green) \\
*(white)  & *(white)  & *(white) & \none & \none & *(green)   \\
\end{ytableau} }\hfill{}\subfloat{\ytableausetup{smalltableaux}
\begin{ytableau} 
*(white)  & *(white)  & *(white)  & *(white)  &  *(white) & *(green) & *(green) & *(green) \\
*(white) & *(white) & *(white)
\end{ytableau} }

\caption{Caterpillar moves}
\end{figure}
We may narrow down the list of caterpillar positions by requiring
that the tail of the caterpillar is aligned to the left. For any $1\leq i\leq r+k$,
we look at the caterpillar configuration for which the tail lies at
$(i,\mu_{i}+1$) (where $\mu_{j}=0$ for $j>r$). We may obtain this
configuration as follows: We start with the diagram $[\mu_{1},\dotsc,\mu_{i}+k,\dotsc,\mu_{r}]$
(when $i>r$ this means $[\mu_{1},\dotsc,\mu_{r},0^{i-r-1},k]$).
This is not necessarily a Young diagram, as the sequence $\mu_{1},\dotsc,\mu_{i}+k,\dotsc,\mu_{r}$
may not be nonincreasing. We perform a sequence of moves to make it
a Young diagram. In each move, we replace a pair of row lengths $\mu_{j},\mu_{j+1}$
(where $\mu_{j+1}\geq\mu_{j}+2$) with the pair $\mu_{j+1}-1,\mu_{j}+1$
(pictorially, wrapping the $j+1$\textsuperscript{th} row around
the corner above it). We stop when no move is applicable.

Here is an illustration for $\mu=[3,2]$, $k=5$ and $i=4$:

\begin{figure}[H]
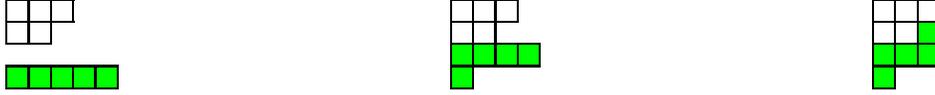

\subfloat{\ytableausetup{smalltableaux}
\begin{ytableau} 
*(white)  & *(white)  & *(white)  \\
*(white)  & *(white) \\
\none \\
*(green) & *(green) & *(green) & *(green) & *(green)  \\
\end{ytableau} }\hfill{}\subfloat{\ytableausetup{smalltableaux}
\begin{ytableau} 
*(white)  & *(white)  & *(white)  \\
*(white)  & *(white) \\
*(green) & *(green) & *(green) & *(green) \\
*(green)  \\
\end{ytableau} }\hfill{}\subfloat{\ytableausetup{smalltableaux}
\begin{ytableau} 
*(white)  & *(white)  & *(white)  \\
*(white)  & *(white) & *(green) \\
*(green) & *(green) & *(green)  \\
*(green)  \\
\end{ytableau}  }

\caption{The wrapping process}
\end{figure}
We now formalize the above description. 
\begin{defn}
We call a diagram of the form $\mu=[\mu_{1},\dotsc,\mu_{r}]$ \emph{a
pre-diagram }if the numbers $\mu_{1},\dotsc,\mu_{r}$ are nonnegative
integers, $\mu_{r}\neq0$, and we have 
\[
|\{i\in\{1,\dotsc,r-1\}:\mu_{i+1}>\mu_{i}\}|\leq1.
\]

Note that any Young diagram is also a pre-diagram.
\end{defn}

Let $R_{j}$ be the move which replaces a pre-diagram $\mu=[\mu_{1},\dotsc,\mu_{r}]$
with $[\mu_{1},\dotsc,\linebreak[0]\mu_{j-1},\mu_{j+1}-1,\mu_{j}+1,\mu_{j+2},\dotsc,\mu_{r}]$.
This move is only applicable when $j\leq r$ and $\mu_{j+1}\geq\mu_{j}+2$.
Note that the resulting sequence is still a pre-diagram.
\begin{lem}
\label{lem:wrap_process}Let $\mu=[\mu_{1},..,\mu_{r}]$ be a pre-diagram.
Consider the following process, called the wrapping process, which
generates a sequence of pre-diagrams $\mu(0),\linebreak[0]\mu(1),\dotsc$:
We start with $\mu(0)=\mu$. Given $\mu(j),$ $\mu(j+1)$ is obtained
from it by one of the moves $R_{l}$. The process terminates when
no move can be applied. Then:

\begin{enumerate}
\item At each stage, there is either one or no move that can be applied.
Thus, the sequence $\mu(0),\mu(1),\dotsc$ is uniquely determined.
\item The sequence $\mu(0),\mu(1),\dotsc$ is finite.
\end{enumerate}
\end{lem}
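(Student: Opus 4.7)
My plan is to handle the two clauses in turn, with uniqueness following from the pre-diagram condition plus a tameness invariant, and finiteness from a monovariant.

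For clause (1), the defining condition $|\{i : \mu_{i+1} > \mu_i\}| \leq 1$ forces any applicable $R_l$ (which requires $\mu_{l+1} \geq \mu_l + 2$) to sit at the unique ascent position, so at most one such $l$ can exist. To keep the iterates within pre-diagrams I would verify that $R_l(\mu)$ has at most one ascent by inspecting each of the four affected positions: the ascent at $l$ itself vanishes since $\mu_{l+1} - 1 \geq \mu_l + 1$; a new ascent at position $l-1$ is allowed (this is the ``migration'' of the single ascent one step to the left); and a new ascent at position $l+1$ is ruled out provided the tameness bound $\mu_{l+2} \leq \mu_l + 1$ holds. This bound is immediate for the initial pre-diagram $[\mu_1, \dotsc, \mu_i + k, \dotsc, \mu_r]$ arising from the caterpillar, since it reduces to $\mu_i \leq \mu_{i-1} + 1$, a consequence of $\mu$ being a Young diagram. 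The same inspection shows the bound propagates under $R_l$: in the new pre-diagram the ascent position shifts to $l-1$ and one checks $\mu_l + 1 \leq \mu_{l-1} + 1$ and $\mu_{l+j} \leq \mu_l + 1 \leq \mu_{l-1} + 1$ for $j \geq 2$.

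For clause (2) I would use the monovariant $S(\mu) = \sum_{i=1}^{r} i\,\mu_i$. A short computation gives
\[
S(R_l(\mu)) - S(\mu) \;=\; l(\mu_{l+1}-1) + (l+1)(\mu_l+1) - l\mu_l - (l+1)\mu_{l+1} \;=\; \mu_l - \mu_{l+1} + 1 \;\leq\; -1
\]
whenever $R_l$ is applicable. Since $S$ is a non-negative integer, the process must terminate in at most $S(\mu(0))$ steps.

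The main obstacle is the pre-diagram preservation inside clause (1). Without an invariant beyond the bare pre-diagram condition the claim can fail (for instance $[4,3,6,5]$ maps under $R_2$ to $[4,5,4,5]$, which has two ascents), so the tameness bound $\mu_{l+2} \leq \mu_l + 1$ is doing real work. Once tameness is established and shown to propagate, uniqueness is immediate from the pre-diagram structure and the monovariant supplies finiteness, completing the proof.
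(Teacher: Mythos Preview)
Your argument is more careful than the paper's, and you have in fact spotted a genuine oversight. The paper asserts, just before the lemma, that ``the resulting sequence is still a pre-diagram'' after applying any $R_{l}$, and both clauses of its very short proof lean on this; your example $[4,3,6,5]\xrightarrow{R_{2}}[4,5,4,5]$ shows that assertion is false for arbitrary pre-diagrams. Your tameness invariant $\mu_{l+2}\le\mu_{l}+1$ at the ascent position $l$ is exactly what is needed to rule out a second ascent appearing to the right, and you correctly check that it holds for the specific pre-diagrams $F_{k,i}(\mu)$ the paper actually uses and that it propagates under $R_{l}$ (using that the original ascent at $l$ was the unique one, so $\mu_{l}\le\mu_{l-1}$). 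Thus your argument establishes what the paper needs, though strictly speaking it proves a corrected version of the lemma (for tame pre-diagrams) rather than the lemma as literally stated --- which, by your own counterexample, is not well-posed in full generality.

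For clause (2), your monovariant $S(\mu)=\sum_{i}i\mu_{i}$ works fine, but the paper's route is simpler: once one knows the result is again a pre-diagram, the ascent position itself drops by exactly $1$ at each move, and a positive integer cannot decrease indefinitely. That observation is already contained in your ``migration of the single ascent one step to the left'', so you could have invoked it directly and skipped the computation with $S$.
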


\begin{proof}
The first assertion is obvious, since for a pre-diagram, there is
at most one index $j$ for which $\mu_{j+1}>\mu_{j}$. The second
assertion is obvious as well, since the index $j$ for $\mu_{j+1}>\mu_{j}$
(if it exists) drops by 1 after each application of a move.
\end{proof}
Let us denote by $Y(\mu)$ the final pre-diagram achieved by the wrapping
process starting from $\mu$. 

Further, for a Young diagram $\mu=[\mu_{1},\dotsc,\mu_{r}]$, and
positive integers $i$ and $k$, let $F_{k,i}(\mu)$ be the diagram

\[
F_{k,i}(\mu)=\begin{cases}
[\mu_{1},\dotsc,\mu_{i}+k,\dotsc,\mu_{r}] & i\leq r\\{}
[\mu_{1},\dotsc,\mu_{r},0^{i-r-1},k] & i>r
\end{cases}
\]
Note that $F_{k,i}(\mu)$ is a pre-diagram.
\begin{lem}
\label{lem:wrap_process_2}Let $n-k\vdash\mu=[\mu_{1},\dotsc,\mu_{r}]$,
$k>0$, $1\leq i\leq r+k$, and let $\delta=Y(F_{k,i}(\mu))$. Then:
\begin{enumerate}
\item $\delta$ is a Young diagram if and only if there exists $\lambda\in E_{\mu,k}$
such that the lowest row of $\lambda\backslash\mu$ is positioned
at $i$. In that case, such $\lambda$ is unique: $\lambda=\delta$.
\item Let $m$ be the total number of moves in the wrapping process on $F_{k,i}(\mu)$.
Then $m=\hght(\delta\backslash\mu)-1$.
\end{enumerate}
\end{lem}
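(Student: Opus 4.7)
The plan is to make the caterpillar picture into a rigorous inductive description of the entire wrapping sequence, and then read both claims directly off the description.

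Writing $\delta^{(m)}$ for the pre-diagram after $m$ moves (and using the convention $\mu_j = 0$ for $j > r$), my central claim is that for every $0 \le m \le m^\ast$, where $m^\ast$ is the stopping step,
\[
\delta^{(m)}_j \;=\; \begin{cases}
\mu_j & 1 \le j \le i - m - 1,\\
\mu_i + k - m & j = i - m,\\
\mu_{j-1} + 1 & i - m + 1 \le j \le i.
\end{cases}
\]
The base case $m = 0$ is exactly $F_{k,i}(\mu)$. For the inductive step, one observes that in $\delta^{(m)}$ the unique potential bump sits at position $j = i - m - 1$ with size $\mu_i + k - m - \mu_{i - m - 1}$, so that $R_{i - m - 1}$ is applicable iff this size is $\ge 2$. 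A direct calculation then shows that applying $R_{i - m - 1}$ alters only rows $i - m - 1$ and $i - m$ of $\delta^{(m)}$, turning them into $\mu_i + k - (m + 1)$ and $\mu_{(i - m) - 1} + 1$ respectively, which is exactly the formula at step $m + 1$.

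From the displayed formula the process halts at the smallest $m^\ast$ with $\mu_i + k - m^\ast - \mu_{i - m^\ast - 1} \le 1$, and $\delta = \delta^{(m^\ast)}$ is a Young diagram iff the stronger inequality $\mu_i + k - m^\ast \le \mu_{i - m^\ast - 1}$ holds, while equality with $\mu_{i - m^\ast - 1} + 1$ leaves an unmovable size-one bump and a non-Young pre-diagram. (The extreme case $m^\ast = i - 1$, where no row above the current bump exists, is automatically Young, since arriving there forces $\mu_i + k \ge \mu_1 + i$ already at the previous step.) In the Young case, the displayed formula shows that $\delta \setminus \mu$ occupies exactly rows $i - m^\ast, \ldots, i$, so $\hght(\delta \setminus \mu) = m^\ast + 1$, which is part (ii).

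For part (i), any border strip of size $k$ with bottom row at $i$ and height $h$ satisfies $\lambda_{j+1} = \mu_j + 1$ for $i - h + 1 \le j \le i - 1$ (forced by connectedness plus the no-$2 \times 2$ condition), $\lambda_j = \mu_j$ outside the rows $i - h + 1, \ldots, i$, and, by a box count, $\lambda_{i - h + 1} = \mu_i + k - h + 1$. Comparing with the displayed formula at $m = h - 1$ shows row by row that $\lambda = \delta^{(h-1)}$; conversely, in the Young case one checks directly from the formula that $\delta^{(m^\ast)} \setminus \mu$ is connected with single-column overlap between consecutive rows (hence contains no $2 \times 2$) and has bottom row at $i$. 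Uniqueness of $\lambda$ is then automatic from determinism of the wrapping. The main technical care lies in boundary bookkeeping: the cases $i > r$ (where $F_{k,i}(\mu)$ has interior zero rows) and $m^\ast = i - 1$ (no row above the bump); the convention $\mu_j = 0$ for $j > r$ and the hypothesis $i \le r + k$ dispatch these cleanly.
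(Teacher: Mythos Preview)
Your approach is essentially the same as the paper's: both derive the explicit formula for the wrapping stages by induction, characterise border strips with bottom row at $i$ via forced row lengths, and then compare. There is, however, a small gap in your treatment of the forward direction of (i). You show that any Young $\lambda\in E_{\mu,k}$ with bottom row at $i$ and height $h$ matches the displayed formula at parameter $m=h-1$, and then write $\lambda=\delta^{(h-1)}$. But $\delta^{(h-1)}$ is, by your own definition, a wrapping stage only if $h-1\le m^{\ast}$, and you have not verified this. Without it you cannot conclude that $\delta$ is Young, nor that $\lambda=\delta$, so both the ``only if'' direction and the uniqueness claim are left hanging (``determinism of the wrapping'' gives uniqueness of $\delta$, not directly of $\lambda$).

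The paper handles exactly this point: working with the set $B_{\mu,i,k}$ of all pre-diagrams (not just Young diagrams) whose difference with $\mu$ is a border strip with bottom row $i$, it shows that any $\beta\in B_{\mu,i,k}$ of height $j+1$ must satisfy $j\le m$, because otherwise the stopping inequality $\mu_{i-m-1}+m\ge\mu_i+k-1$ combined with monotonicity of $\mu$ forces $\beta_{i-j}=\mu_i+k-j\le\mu_{i-j}$, contradicting that row $i-j$ lies in the strip. In your language the fix is to check that for every $0\le m<h-1$ the move $R_{i-m-1}$ is indeed applicable: from $\lambda_{i-h+1}>\mu_{i-h+1}$ one gets $\mu_i+k-(h-1)\ge\mu_{i-h+1}+1$, and since $\mu_{i-h+1}\ge\mu_{i-m-1}$ and $k-m>k-(h-1)$, the bump size $\mu_i+k-m-\mu_{i-m-1}$ is at least $2$. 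This is easy but not automatic; it is the one substantive step your sketch omits.
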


\begin{proof}
Let $\lambda=F_{k,i}(\mu)$ and let $\lambda=\lambda(0),\lambda(1),\dotsc,\lambda(m)=\delta$
be the stages of the wrapping process. It can be shown by induction
that for all $j$, 
\begin{equation}
\lambda(j)=[\mu_{1},\dotsc,\mu_{i-j-1},(\mu_{i}+k-j),\mu_{i-j}+1,\dotsc,\mu_{i-1}+1,\mu_{i+1},\dotsc,\mu_{r}]\label{eq:j_th_step}
\end{equation}
and $u_{i}+k-j\geq u_{i-j}+1$. The only move that can be applied
on $\lambda(j)$ (if any) is $R_{i-j-1}$. The process terminates
when $R_{i-j-1}$ can not be applied to $\lambda(j)$, i.e., when
\[
\mu_{i}+k-j\leq\mu_{i-j-1}+1
\]
which is equivalent to

\[
\mu_{i-j-1}+j\geq\mu_{i}+k-1
\]
hence,
\begin{equation}
\mu_{i-m-1}+m\geq\mu_{i}+k-1\label{eq:mu_ineq}
\end{equation}

Let $B_{\mu,i,k}$ be the set of pre-diagrams $\beta$ such that $\beta\backslash\mu$
is a border strip of size $k$, whose lowest row is at $i$. We claim
that $B_{\mu,i,k}=\{\lambda(0),\dotsc,\lambda(m)\}$. Indeed, any
$\beta\in B_{\mu,i,k}$ must be of the form $[\mu_{1},\dotsc,\mu_{i-j-1},\beta_{i-j},\dotsc,\beta_{i},\mu_{i+1},\dotsc,\mu_{r}]$
for some $j\geq0$ and some numbers $\beta_{i-j}\geq\beta_{i-j+1}\dotsc\geq\beta_{i}$
satisfying $\beta_{l}>\mu_{l}$ for all $l$, and 
\begin{equation}
\sum_{l=i-j}^{i}(\beta_{l}-\mu_{l})=k.\label{eq:total_k}
\end{equation}
By the border strip conditions, we must have for all $i-j<l\leq i$,
$\beta_{l}\geq\mu_{l-1}+1$ (otherwise $\beta\backslash\mu$ would
not be connected), and $\beta_{l}<\mu_{l-1}+2$ (otherwise we would
have $\beta_{l-1}\geq\beta_{l}\geq\mu_{l-1}+2$ and then $\beta\backslash\mu$
would contain a $2\times2$ square, namely $\{l-1,l\}\times\{\mu_{l-1}+1,\mu_{l-1}+2\}$,
contrary to the border strip conditions). Hence $\beta_{l}=\mu_{l-1}+1$
for all such $l$, and therefore $\beta_{i-j}=\mu_{i}+k-j$ by (\ref{eq:total_k}).
We must have $j\leq m$, as otherwise, we would have by (\ref{eq:mu_ineq}),
$\mu_{i-j}+(j-1)\geq\mu_{i-m-1}+m\geq\mu_{i}+k-1$, and consequently,
$\beta_{i-j}=\mu_{i}+k-j\leq\mu_{i-j}$, a contradiction. We have
therefore verified that $B_{\mu,i,k}\subseteq\{\lambda(0),..,\lambda(m)\}$.
In the other direction, it is straightforward to check that $\lambda(j)\in B_{\mu,i,k}$
for all $0\leq j\leq m$. This shows our claim.

Clearly, $\lambda(0),\dotsc,\lambda(m-1)$ are not Young diagrams,
as none of the moves is applicable on a Young diagram. We conclude
that $B_{\mu,i,k}\cap E_{\mu,k}\subseteq\{\lambda(m)\}=\{\delta\}$,
with equality if and only if $\delta$ is a Young diagram. This proves
(i). Finally, by equation (\ref{eq:j_th_step}), $\hght(\lambda(j)\backslash\mu)=j+1$
for all $j$. Setting $j=m$, we get $(ii)$.
\end{proof}
\begin{cor}
\label{cor:Ek-parametrization}Let $\mathbb{Y}$ be the set of all
Young diagrams. Then 
\[
E_{\mu,k}=\left\{ Y(F_{k,i}(\mu))|\,1\leq i\leq\hght(\mu)+k\right\} \cap\mathbb{Y}
\]
\end{cor}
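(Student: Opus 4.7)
The plan is to derive the corollary directly from Lemma \ref{lem:wrap_process_2}(i) by stratifying $E_{\mu,k}$ according to the row index of the lowest row of the border strip $\lambda\setminus\mu$. Since any border strip of size $k$ attached to $\mu$ from below has its lowest row at some index $i$ with $1\le i\le\hght(\mu)+k$, the indexing set in the corollary is exactly the set of possible lowest-row positions.

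For the inclusion $E_{\mu,k}\subseteq\{Y(F_{k,i}(\mu)):1\le i\le\hght(\mu)+k\}\cap\mathbb{Y}$, I would take $\lambda\in E_{\mu,k}$ and let $i$ be the row index of the lowest row of $\lambda\setminus\mu$. The forward direction of Lemma \ref{lem:wrap_process_2}(i) then asserts that $Y(F_{k,i}(\mu))$ is a Young diagram equal to $\lambda$, placing $\lambda$ in the right-hand set. Conversely, if $Y(F_{k,i}(\mu))$ is a Young diagram for some admissible $i$, the ``only if'' direction of the same lemma identifies it as the unique element of $E_{\mu,k}$ whose border strip has lowest row at $i$, so it belongs to $E_{\mu,k}$.

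The two inclusions together give the equality. There is essentially no obstacle: all the substantive work — verifying that the wrapping process terminates, that it produces the correct border strip when it produces a Young diagram at all, and that every border strip arises this way — has been carried out in Lemmas \ref{lem:wrap_process} and \ref{lem:wrap_process_2}. The corollary is a clean packaging of those facts as a parametrization of $E_{\mu,k}$ by the index $i$, which is the form that will be convenient for the summation in (\ref{eq:exp_formula}).
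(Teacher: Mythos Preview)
Your proposal is correct and follows exactly the route the paper intends: the corollary is stated without proof, as an immediate consequence of Lemma~\ref{lem:wrap_process_2}(i), and your two inclusions (stratifying by the lowest row of the border strip, then invoking both directions of the lemma) are precisely the details the paper leaves implicit. The only point you might make fully explicit is the bound $i\le\hght(\mu)+k$, which you assert; it holds because for $i>\hght(\mu)$ each of the rows $\hght(\mu)+1,\dotsc,i$ of $\lambda$ lies entirely in the border strip and contributes at least one box.
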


We will use this parametrization of $E_{\mu,k}$ in the evaluation
of formula (\ref{eq:exp_formula}). As a first step, let us evaluate
the laplacian eigenvalue $\rho(Y(F_{k,i}(\mu))):$
\begin{lem}
\label{lem:rho_value}Assume that $Y(F_{k,i}(\mu))$ is a Young diagram,
where $n-k\vdash\mu$. We have
\[
\rho(Y(F_{k,i}(\mu)))=\binom{n}{2}+\sum_{(r,s)\in\mu}c((r,s))+k\left(i-\mu_{i}-\frac{k+1}{2}\right)
\]
\end{lem}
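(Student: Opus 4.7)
The plan is to start from equation (\ref{eq:character_ratio}), which gives $\rho(\lambda) = \binom{n}{2} + \sum_{(r,s) \in \lambda} c((r,s))$ for any Young diagram $\lambda \vdash n$. Applying this with $\lambda = Y(F_{k,i}(\mu))$ and using the fact that $\mu \subseteq \lambda$, I would split the content sum as
\[
\sum_{(r,s) \in \lambda} c((r,s)) = \sum_{(r,s) \in \mu} c((r,s)) + \sum_{(r,s) \in \lambda \setminus \mu} c((r,s)).
\]
The first term is already the one appearing in the statement, so the whole task reduces to showing that the content sum over the border strip $\lambda \setminus \mu$ equals $k\bigl(i - \mu_i - (k+1)/2\bigr)$.

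The key observation is that the contents of the $k$ boxes of a border strip of size $k$ form a set of $k$ consecutive integers. This is a general fact: since $\lambda \setminus \mu$ is a connected skew shape with no $2\times 2$ subshape, its boxes can be enumerated along a path that at each step moves either one row up or one column to the right. Moving up decreases the row index by $1$ while fixing the column, and moving right increases the column index by $1$ while fixing the row, so in either case $c(r,s) = r - s$ decreases by exactly $1$. Hence, traversing the strip from its bottom-left box to its top-right box, the contents form a strictly decreasing sequence of $k$ consecutive integers.

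It then remains to identify the maximum content. By lemma \ref{lem:wrap_process_2} (and the description (\ref{eq:j_th_step}) of $Y(F_{k,i}(\mu))$), the lowest row of $\lambda \setminus \mu$ is row $i$, and in $\mu$ this row occupies columns $1, \dotsc, \mu_i$ (with the convention $\mu_i = 0$ when $i > \hght(\mu)$). The leftmost box of the strip in row $i$ is therefore $(i, \mu_i + 1)$, which is the bottom-left endpoint of the strip and has content $i - \mu_i - 1$. Consequently, the contents of $\lambda \setminus \mu$ are precisely $i - \mu_i - k,\, i - \mu_i - k + 1,\, \dotsc,\, i - \mu_i - 1$, whose sum is
\[
k(i - \mu_i - 1) - \binom{k}{2} = k\left(i - \mu_i - \frac{k+1}{2}\right),
\]
yielding the desired formula for $\rho$.

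The only genuinely non-formal step is the ``consecutive contents'' observation; everything else is a direct plug-in. An alternative would be to compute the border-strip content sum row by row using (\ref{eq:j_th_step}), but that path involves telescoping over the indices $\mu_{i-m}, \dotsc, \mu_{i-1}$ and is considerably more painful than exploiting the general border-strip fact.
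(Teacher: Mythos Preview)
Your proof is correct, but it takes a different route from the paper's. The paper does not compute the content sum over the border strip $\lambda\setminus\mu$ at all; instead it observes that each wrapping move $R_l$ shifts some boxes one step in the up-left direction, hence preserves every individual content $c((r,s))$, and therefore preserves the total content sum. This lets them evaluate the content sum on the \emph{pre-diagram} $F_{k,i}(\mu)$, where the $k$ extra boxes all sit in row $i$ at columns $\mu_i+1,\dotsc,\mu_i+k$, giving the arithmetic sum $\sum_{s=\mu_i+1}^{\mu_i+k}(i-s)$ directly.

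Your approach trades the invariance observation for the (equally standard) fact that a border strip has consecutive contents, and then pins down the largest content via lemma~\ref{lem:wrap_process_2}. Both land on the same arithmetic progression $i-\mu_i-1,\dotsc,i-\mu_i-k$. The paper's argument is slightly more self-contained here because it never needs to know the actual shape of $Y(F_{k,i}(\mu))$; yours, on the other hand, makes explicit a structural fact about ribbons that some readers may find more transparent, and does not depend on the caterpillar description at all beyond identifying the bottom row.
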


\begin{proof}
We observe that the sum $\sum_{(r,s)\in\lambda}c(r,s)$ is invariant
under the moves $R_{l}$ (as each such move is equivalent to moving
some boxes in the up-left direction, and $c((r,s))=c((r-1,s-1))$
for all $r,s$). Since $Y(F_{k,i}(\mu))$ is obtained from $F_{k,i}(\mu)$
by such moves, we can plug $F_{k,i}(\mu)=[\mu_{1},\dotsc,\mu_{i}+k,\dotsc,\mu_{r}]$
in formula (\ref{eq:character_ratio}) and obtain $\rho(Y(F_{k,i}(\mu)))$:

\[
\rho(Y(F_{k,i}(\mu)))=\binom{n}{2}+\sum_{(r,s)\in\mu}c((r,s))+\sum_{s=\mu_{i}+1}^{\mu_{i}+k}c((i,s))
\]
The result follows by evaluating the arithmetic sum in the last term.
\end{proof}

\section{\label{sec:The-hook-parametrization}The hook parametrization}
\begin{defn}
For a pre-diagram $\lambda=[\lambda_{1},\lambda_{2},\dotsc,\lambda_{r}]$
(where $\lambda_{i}\ge0$, $\lambda_{r}>0$) we call 
\[
\lambda^{(i)}=\lambda_{i}+r-i\,\,\,\,i=1,2,\dotsc,r
\]
\emph{the hook numbers} of $\lambda$.

If $\lambda$ is a Young diagram, then $\lambda^{(i)}$ is the length
of the hook whose corner is the box $(i,1)$. Also, for a Young diagram,
since $\lambda_{1}\geq\lambda_{2}\geq...\lambda_{r}\geq0$, the numbers
$\lambda^{(i)}$ satisfy $\lambda^{(1)}>\lambda^{(2)}>\dotsb>\lambda^{(r)}>0$,
and in particular, $\lambda^{(i)}\neq\lambda^{(j)}$ for $i\neq j$.
As we shall see, representing diagrams by their hook numbers leads
to a considerable simplification of formula (\ref{eq:exp_formula}).
\end{defn}

\begin{lem}
\label{thm:Hooks_and_Moves}Let $\lambda=[\lambda_{1},\dotsc,\lambda_{r}]$
be a pre-diagram, and let $\delta=Y(\lambda)$.

\begin{enumerate}
\item \label{enu:The-set-of}$\lambda$ and $\delta$ have the same hook
lengths (possibly in a different order).
\item $\delta$ is a Young diagram if and only if $\lambda^{(i)}\neq\lambda^{(j)}$
for all $i\neq j$.
\end{enumerate}
\end{lem}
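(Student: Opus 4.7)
The plan is to reduce everything to understanding what a single move $R_j$ does to the hook numbers. Suppose $R_j$ is applicable to a pre-diagram $[\lambda_1,\dotsc,\lambda_r]$, so it replaces $\lambda_j,\lambda_{j+1}$ with $\lambda_{j+1}-1,\lambda_j+1$. Compute the new hook numbers at positions $j$ and $j+1$: the new $j$-th hook number is $(\lambda_{j+1}-1)+r-j=\lambda_{j+1}+r-(j+1)=\lambda^{(j+1)}$, and the new $(j+1)$-th hook number is $(\lambda_j+1)+r-(j+1)=\lambda_j+r-j=\lambda^{(j)}$. All other hook numbers are unchanged. So $R_j$ acts on the sequence of hook numbers by the transposition swapping positions $j$ and $j+1$. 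In particular, the multiset of hook numbers is an invariant of the wrapping process, which proves clause (i) by induction on the number of moves (lemma \ref{lem:wrap_process}).

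For clause (ii), I would analyse the final pre-diagram $\delta=Y(\lambda)$. By definition of the process, no move is applicable to $\delta$, i.e., $\delta_{j+1}\le\delta_j+1$ for every $j<\hght(\delta)$. Combined with the pre-diagram property, there is at most one index $j$ with $\delta_{j+1}>\delta_j$, and for that index we must have $\delta_{j+1}=\delta_j+1$. There are therefore exactly two cases: either no such $j$ exists, in which case $\delta$ is weakly decreasing and hence a Young diagram; or exactly one such $j$ exists, in which case $\delta$ is not a Young diagram. The crucial observation is that in the second case, the hook numbers at $j$ and $j+1$ collide, since $\delta^{(j+1)}=\delta_{j+1}+r-(j+1)=(\delta_j+1)+r-j-1=\delta_j+r-j=\delta^{(j)}$.

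Putting the two clauses together: $\delta$ is a Young diagram iff its hook numbers are pairwise distinct (the forward direction is standard, since for a Young diagram $\delta^{(1)}>\delta^{(2)}>\dotsb>\delta^{(r)}$; the backward direction is what the above case analysis provides, via the contrapositive). By clause (i), $\delta$ and $\lambda$ have the same multiset of hook numbers, so this is equivalent to $\lambda^{(i)}\neq\lambda^{(j)}$ for all $i\neq j$, establishing clause (ii).

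The only step requiring any care is verifying that a terminal pre-diagram which is not a Young diagram must have a repeated hook number; everything else is mechanical. Since the move $R_j$ is essentially designed to realise a cyclic ``wrapping'' that corresponds to the action of a simple transposition on the strictly-decreasing rearrangement of the hook numbers, this lemma is really saying that the wrapping process is the sorting algorithm for hook numbers, and the obstruction to success is exactly a collision.
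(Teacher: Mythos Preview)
Your proof is correct and follows essentially the same approach as the paper: both arguments show that each move $R_j$ swaps the hook numbers at positions $j$ and $j+1$ (proving (i)), and then characterise the terminal pre-diagram $\delta$ by noting that terminality forces $\delta_{j+1}\le\delta_j+1$, so $\delta$ fails to be a Young diagram precisely when some $\delta_{j+1}=\delta_j+1$, which is equivalent to a collision $\delta^{(j)}=\delta^{(j+1)}$. Your concluding remark interpreting the wrapping process as a sorting algorithm on hook numbers is a nice gloss that the paper does not make explicit.
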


\begin{proof}
~

\begin{enumerate}
\item Since $\delta$ is obtained from $\lambda$ by a sequence of the moves
$\{R_{j}\}$, it is enough to show that any such move preserves the
multiset of hook lengths. Indeed, the move $R_{j}$ replaces $\lambda_{j},\lambda_{j+1}$
by $\lambda_{j+1}-1,\lambda_{j}+1$, while leaving the other row lengths
intact. Thus the pair $\lambda^{(j)},\lambda^{(j+1)}$ is replaced
by $\lambda^{(j+1)},\lambda^{(j)}$, and the multiset $\{\lambda^{(1)},\dotsc,\lambda^{(r)}\}$
is preserved.
\item If $\delta$ is a Young diagram, then we have $\delta^{(i)}\neq\delta^{(j)}$
for $i\neq j$, and by (\ref{enu:The-set-of}), $\lambda^{(i)}\neq\lambda^{(j)}$
for all $i\neq j$. In the other direction, if $\lambda^{(i)}\neq\lambda^{(j)}$
for all $i\neq j$ then again by (\ref{enu:The-set-of}), we have
$\delta^{(i)}\neq\delta^{(j)}$ for $i\neq j$. Moreover, none of
the moves $\{R_{j}\}$ is applicable to $\delta$, so \textbf{$\delta_{j+1}\leq\delta_{j}+1$
}for all $1\leq j\leq r-1$. Since $\delta^{(j+1)}\neq\delta^{(j)}$
for any such $j$, we have $\delta_{j+1}\neq\delta_{j}+1$. Hence
$\delta_{1}\geq\delta_{2}\geq\dotsc\geq\delta_{r}$, and $\delta$
is a Young diagram.\qedhere
\end{enumerate}
\end{proof}
For any Young diagram $\lambda=[\lambda_{1},\dotsc,\lambda_{m}]$,
there is a well-known formula for the dimension $d_{\lambda}$ in
terms of the hook numbers $\lambda^{(i)}$:
\begin{lem}
(The Young-Frobenius formula). 
\[
d_{\lambda}=\frac{n!}{\prod_{t=1}^{m}\lambda^{(t)}!}\prod_{1\leq t<s\leq m}\left(\lambda^{(t)}-\lambda^{(s)}\right)
\]
\end{lem}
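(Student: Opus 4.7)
The plan is to derive this classical identity from Frobenius's character formula. Specializing that formula at the identity (whose cycle type is $1^n$) identifies $d_\lambda = \chi_\lambda(\mathrm{id})$ with the coefficient of $x_1^{\lambda^{(1)}} \cdots x_m^{\lambda^{(m)}}$ in $(x_1+\cdots+x_m)^n \prod_{i<j}(x_i-x_j)$, where $m = \hght(\lambda)$, so the hook numbers $\lambda^{(i)} = \lambda_i + m - i$ are exactly the exponents needed. This is the one representation-theoretic input; the rest of the argument is pure linear algebra.

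Expanding the Vandermonde as $\sum_{\sigma \in \Sym_m}\mathrm{sgn}(\sigma)\prod_i x_i^{m-\sigma(i)}$ and reading off multinomial coefficients term by term converts the coefficient extraction into the determinantal expression
\[
d_\lambda = n!\,\det\left(\frac{1}{(\lambda^{(i)}-m+j)!}\right)_{i,j=1}^m,
\]
with the convention that $1/(-k)! = 0$ for $k>0$. Factoring $1/\lambda^{(i)}!$ out of row $i$ turns entry $(i,j)$ into the falling product $\lambda^{(i)}(\lambda^{(i)}-1)\cdots(\lambda^{(i)}-m+j+1)$, which is a monic polynomial of degree $m-j$ in $\lambda^{(i)}$. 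Iteratively subtracting multiples of the columns to the right to cancel lower-degree terms reduces the matrix to the pure Vandermonde $\bigl((\lambda^{(i)})^{m-j}\bigr)$, whose determinant is $\prod_{i<j}(\lambda^{(i)}-\lambda^{(j)})$. Multiplying the three factors together gives the asserted formula.

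The only mild subtlety is that the hook numbers depend on the chosen number of rows $m$, so one should confirm that the formula is invariant under padding $\lambda$ with additional zero rows. Padding shifts every old $\lambda^{(i)}$ up by $1$ and introduces a new $\lambda^{(m+1)} = 0$; the denominator acquires an extra factor $\prod_i(\lambda^{(i)}_{\mathrm{old}}+1)$, matched exactly by the new Vandermonde factors $\lambda^{(i)}_{\mathrm{new}}-0 = \lambda^{(i)}_{\mathrm{old}}+1$. Beyond this bookkeeping check, no genuine obstacle is expected: the main content sits entirely in the invocation of Frobenius's formula, and the remaining determinant manipulation is standard.
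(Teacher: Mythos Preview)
Your argument is correct and is in fact one of the standard derivations of this classical identity. The paper, however, does not prove the lemma at all: it simply states the formula, attributes it to Frobenius and Young, and remarks that it is equivalent to the hook length formula. So there is nothing to compare against beyond noting that you have supplied a self-contained proof where the paper is content to cite the literature.
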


This formula was discovered by Frobenius \cite{F00} and independently
by Young \cite{Young}. It is not difficult to see its equivalence
to the more familiar hook formula. 

We can now evaluate the numbers $(-1)^{ht(\lambda\backslash\mu)+1}d_{\lambda}$
which appear in formula (\ref{eq:exp_formula}):
\begin{thm}
\label{thm:dim-formula}Let $\mu=[\mu_{1},\dotsc,\mu_{r}]$, $\lambda=F_{k,i}(\mu)$
and $\delta=Y(\lambda)$. Let $m=ht(\delta)=ht(\lambda)=\max\{i,r\}$.
We have
\begin{equation}
\frac{n!}{\prod_{t=1}^{m}\lambda^{(t)}!}\prod_{1\leq t<s\leq m}\left(\lambda^{(t)}-\lambda^{(s)}\right)=\begin{cases}
(-1)^{ht(\delta\backslash\mu)+1}d_{\delta} & \delta\mbox{ is a Young diagram}\\
0 & \mbox{otherwise}
\end{cases}\label{eq:signed_dim}
\end{equation}
 
\end{thm}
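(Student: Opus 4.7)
The plan is to combine the invariance of the hook-number multiset under the wrapping moves (Lemma~\ref{thm:Hooks_and_Moves}) with the Young--Frobenius formula, keeping careful track of signs. First, if $\delta$ is not a Young diagram, then by Lemma~\ref{thm:Hooks_and_Moves}(ii) two hook numbers $\lambda^{(i)},\lambda^{(j)}$ with $i\neq j$ coincide, so $\prod_{t<s}(\lambda^{(t)}-\lambda^{(s)})$ vanishes and the identity holds trivially.

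Suppose now that $\delta$ is a Young diagram. From the proof of Lemma~\ref{thm:Hooks_and_Moves}(i) one reads off that each move $R_j$ acts on the tuple of hook numbers simply by transposing the entries in positions $j$ and $j+1$. Hence the wrapping process $\lambda \to \delta$ corresponds to a composition of $m := \hght(\delta \setminus \mu)-1$ adjacent transpositions (using Lemma~\ref{lem:wrap_process_2}(ii)). In particular the multiset of hook numbers is preserved, so $\prod_t \lambda^{(t)}! = \prod_t \delta^{(t)}!$. A one-line calculation shows that each such adjacent transposition multiplies the Vandermonde-type product $V(\lambda) := \prod_{t<s}(\lambda^{(t)}-\lambda^{(s)})$ by $-1$: the lone factor $(\lambda^{(j)}-\lambda^{(j+1)})$ flips sign, while for every $t \notin \{j,j+1\}$ the pair of factors at positions $\{t,j\}$ and $\{t,j+1\}$ is merely permuted among themselves. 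Therefore $V(\lambda) = (-1)^m V(\delta)$.

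Putting these together with the Young--Frobenius formula applied to $\delta$ yields
\[
\frac{n!}{\prod_t \lambda^{(t)}!}\prod_{t<s}(\lambda^{(t)} - \lambda^{(s)}) = (-1)^m \frac{n!}{\prod_t \delta^{(t)}!}\prod_{t<s}(\delta^{(t)} - \delta^{(s)}) = (-1)^{\hght(\delta\setminus\mu) + 1} d_\delta,
\]
using $(-1)^m = (-1)^{\hght(\delta\setminus\mu)-1} = (-1)^{\hght(\delta\setminus\mu)+1}$. The only non-routine step is the per-swap sign calculation, which is elementary bookkeeping of Vandermonde factors; everything else follows directly from the already-established Lemmas~\ref{thm:Hooks_and_Moves} and~\ref{lem:wrap_process_2}.
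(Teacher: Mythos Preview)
Your proof is correct and follows essentially the same approach as the paper's: invoke Lemma~\ref{thm:Hooks_and_Moves} for the non-Young case and for hook-number invariance, track the sign via the number of wrapping moves from Lemma~\ref{lem:wrap_process_2}(ii), and apply the Young--Frobenius formula. One minor point: you introduce $m$ for the number of moves, but the theorem statement already reserves $m=\hght(\delta)=\max\{i,r\}$, so pick a different symbol to avoid the clash.
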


\begin{proof}
By lemma \ref{thm:Hooks_and_Moves}, $\delta$ is a Young diagram
if and only if all the numbers $\lambda^{(1)},\dotsc,\linebreak[0]\lambda^{(m)}$
are different from one another. Let us assume that $\delta$ is indeed
a Young diagram. Since $\{\delta^{(1)},\dotsc,\delta^{(m)}\}=\{\lambda^{(1)},\dotsc,\lambda^{(m)}\}$,
by the Young-Frobenius formula we have
\[
d_{\lambda}=\pm\frac{n!}{\prod_{t=1}^{m}\lambda^{(t)}!}\prod_{1\leq t<s\leq m}\left(\lambda^{(t)}-\lambda^{(s)}\right)
\]
The sign in this formula is the sign of the permutation which takes
$\{\delta^{(1)},\dotsc,\delta^{(m)}\}$ to $\{\lambda^{(1)},\dotsc,\lambda^{(m)}\}$.
We have seen in the proof of lemma \ref{thm:Hooks_and_Moves} that
each move $R_{j}$ applied to a pre-diagram induces a switch of two
consecutive values in the corresponding hook numbers. By lemma \ref{lem:wrap_process_2},
it takes $\hght(\lambda\backslash\mu)-1=\hght(\delta\backslash\mu)-1$
moves to get from $\lambda$ to $\delta$. This proves the formula.
\end{proof}

\section{The expectation of \texorpdfstring{$\alpha_{k}2^{\alpha}$}{alpha k times 2 to the alpha}
at time \texorpdfstring{$t$}{t}, continued\label{sec:tau}}

We are now ready to complete the calculation of $\mathbb{E}\left(\alpha_{k}\left(\pi(t)\right)2^{\alpha(\pi(t))}\right)$. 
\begin{thm}
\label{thm:onek formula}Let $y=e^{-tk}$. Then 
\[
\mathbb{E}\left(\alpha_{k}\left(\pi(t)\right)2^{\alpha(\pi(t))}\right)=\frac{2}{k}\sum_{n-k\vdash[a,b]}(a+1-b)\phi([a,b],k,t)
\]
where $\phi$ is given, for $b>0$, by
\begin{align*}
\phi([a,b],k,t) & =\binom{n}{k-1,a+1,b}\cdot e^{-t(ab+b)}\Biggl[\frac{(a+1-b)y^{a+b+2}(1-y)^{k-1}}{k}\\
 & \qquad\qquad+\;y^{b}(a+1-b+k)\int_{y}^{1}x^{a+1}(1-x)^{k-1}dx\\
 & \qquad\qquad+\;y^{a+1}(a+1-b-k)\int_{y}^{1}x^{b}(1-x)^{k-1}dx\Biggr]
\end{align*}
and for $b=0$ by
\[
\phi([n-k],k,t)=\binom{n}{k}\left[y^{n-k+1}(1-y)^{k-1}+k\int_{y}^{1}x^{n-k}(1-x)^{k-1}dx\right]
\]
\end{thm}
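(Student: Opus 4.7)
My plan is to substitute into equation (\ref{eq:exp_formula}) the parametrization of $E_k(\mu)$ provided by corollary \ref{cor:Ek-parametrization}, indexing $\lambda\in E_k([a,b])$ by an integer $i\in\{1,\ldots,\hght(\mu)+k\}$ via $\lambda=Y(F_{k,i}(\mu))$. The key leverage is that theorem \ref{thm:dim-formula} rewrites the signed dimension $(-1)^{\hght(\lambda\setminus\mu)+1}d_\lambda$ directly in terms of the hook numbers of the pre-diagram $F_{k,i}(\mu)$ and vanishes automatically when $F_{k,i}(\mu)$ has repeated hook numbers. This allows me to sum over \emph{all} $i$ uniformly, without pre-screening the caterpillar positions that land on genuine Young diagrams.

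For $\mu=[a,b]$ with $b>0$, the pre-diagram $F_{k,i}(\mu)$ takes three explicit forms: $[a+k,b]$ when $i=1$, $[a,b+k]$ when $i=2$, and $[a,b,0^{i-3},k]$ when $i\geq 3$. In each case the hook numbers and the value $\mu_i$ are explicit, so a direct computation from lemma \ref{lem:rho_value}, using $n=a+b+k$, shows that $\rho(\lambda)=ab+b+kj_i$ with $j_1=b$, $j_2=a+1$, and $j_i=a+b-1+i$ for $i\geq 3$. Thus $e^{-t\rho(\lambda)}=e^{-t(ab+b)}y^{j_i}$, which cleanly extracts the $e^{-t(ab+b)}$ prefactor in $\phi$ and leaves only powers of $y=e^{-tk}$ to track. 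The $i=1$ and $i=2$ contributions, after applying the hook formula and the beta identity $(k-1)!(a+1)!/(a+k+1)!=\int_0^1 x^{a+1}(1-x)^{k-1}\,dx$, reproduce the two integral terms of $\phi$ but with $\int_0^1$ in place of $\int_y^1$.

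The sum over $i\in\{3,\ldots,k+2\}$ is automatically finite: for $i\geq k+3$ the middle hook value $k$ coincides with one of $\{1,\ldots,i-3\}$, forcing a Vandermonde factor to vanish. After substituting $m=i-3$ and extracting common factors, the remaining sum becomes $\sum_{m=0}^{k-1}(-y)^m\binom{k-1}{m}$ times a rational function of $m$. Matching coefficients of $y^{a+b+m+2}$ between the $i\geq 3$ sum and the missing $\int_0^y$ pieces in the $i=1,2$ contributions together with the $(1-y)^{k-1}/k$ main term reduces to verifying, for each $m$, the partial-fraction identity
\[
\frac{a+1-b}{k}-\frac{a+1-b+k}{a+m+2}-\frac{a+1-b-k}{b+m+1}=\frac{(a+1-b)(a+m+2-k)(b+m+1-k)}{k(a+m+2)(b+m+1)},
\]
which upon clearing denominators collapses to the trivial equality $(a+m+2)-(b+m+1)=a+1-b$. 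The case $b=0$ is a degenerate version in which only $i=1$ and $i\geq 2$ appear and the analogous simpler identity produces the two-term formula for $\phi([n-k],k,t)$. The real technical burden in executing this plan is the careful bookkeeping of the Vandermonde product $\prod_{t<s}(\lambda^{(t)}-\lambda^{(s)})$ for $i\geq 3$; once the signs from the middle hook values $\{1,\ldots,i-3\}$ and the cancellations against $k$ are tracked, the remaining algebra is routine.
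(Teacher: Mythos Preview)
Your proposal is correct and follows essentially the same route as the paper: split the inner sum of (\ref{eq:exp_formula}) into the contributions $i=1$, $i=2$, and $i\ge 3$ via corollary \ref{cor:Ek-parametrization} and theorem \ref{thm:dim-formula}, compute the eigenvalue via lemma \ref{lem:rho_value}, apply the same partial-fraction decomposition to the rational factor, sum the $i\ge 3$ terms with the binomial identity, and absorb the $i=1,2$ terms via the Beta integral. The only cosmetic difference is that the paper handles the boundary case $b=0$ by quoting the formula for $\mathbb{E}(\alpha_k(\pi(t)))$ from \cite{BK} rather than repeating the (simpler) one-row version of the same computation you outline.
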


\begin{proof}
Let us repeat equation (\ref{eq:exp_formula}):

\[
\mathbb{E}\left(\alpha_{k}\left(\pi(t)\right)2^{\alpha(\pi(t))}\right)=\frac{2}{k}\sum_{n-k\vdash\mu=[a,b]}(a-b+1)\sum_{\lambda\in E_{k}(\mu)}(-1)^{\hght(\lambda\backslash\mu)+1}d_{\lambda}e^{-t\rho(\lambda)}
\]
where $E_{k}(\mu)$ is the set of $\lambda\dashv n$ such that $\mu\subset\lambda$
and $\lambda\setminus\mu$ is a border strip of size $k$. 

We now fix some $n-k\vdash\mu=[a,b]$, and denote the inner sum in
the above equation by
\[
\tau(\mu,k,t)=\sum_{\lambda\in E_{k}(\mu)}(-1)^{ht(\lambda\backslash\mu)+1}d_{\lambda}e^{-t\rho(\lambda)}
\]
Our goal is to prove that $\tau(\mu,k,t)=\phi(\mu,k,t)$. We treat
the case $b>0$ first.

By corollary \ref{cor:Ek-parametrization} and theorem \ref{thm:dim-formula},
we may write $\tau(\mu,k,t)$ as
\begin{equation}
\tau(\mu,k,t)=\sum_{i=1}^{k+2}D(\mu,k,i)\exp\big(-t\rho(Y(F_{k,i}(\mu)))\big)\label{eq:tau_def}
\end{equation}
where

\begin{align*}
D(\mu,k,i) & =\begin{cases}
(-1)^{ht(Y(F_{k,i}(\mu)\backslash\mu))+1}d_{Y(F_{k,i}(\mu))} & Y(F_{k,i}(\mu))\mbox{ is a Young diagram}\\
0 & \mbox{otherwise}
\end{cases}\\
 & =\frac{n!}{\prod_{t=1}^{m}\gamma^{(t)}!}\prod_{1\leq t<s\leq m}\left(\gamma^{(t)}-\gamma^{(s)}\right)
\end{align*}
and where $\gamma=[\gamma_{1},\dotsc,\gamma_{m}]=F_{k,i}(\mu)$. Note
that $\rho(Y(F_{k,i}(\mu)))$ was not defined when $Y(F_{k,i}(\mu))$
is not a Young diagram \textemdash{} for some values of $i$ and $k$
it may just be a prediagram \textemdash{} but in such cases, it is
multiplied by zero in (\ref{eq:tau_def}), so we may define it arbitrarily.
It turns out that the cases $i=1$ and $2$ are best handled separately,
so write 
\[
\tau(\mu,k,t)=\Romanbar{I}+\Romanbar{II}+\Romanbar{III}
\]
 where $\Romanbar{I}=D(\mu,k,1)\exp(-t\rho(Y(F_{k,1}(\mu))))$, $\Romanbar{II}=D(\mu,k,2)\exp(-t\rho(Y(F_{k,2}(\mu))))$
and $\Romanbar{III}$ is the sum over the remaining terms.

Let us consider a few cases and write $\gamma$ and its vector of
hook lengths, $(\gamma^{(1)},\dotsc,\linebreak[0]\gamma^{(m)})$ in
each case:

\[
\gamma=\begin{cases}
[a+k,b] & i=1\\{}
[a,b+k] & i=2\\{}
[a,b,0^{i-3},k] & i>2
\end{cases}
\]
and consequently
\[
(\gamma^{(1)},\dotsc,\gamma^{(m)})=\begin{cases}
(a+k+1,b) & i=1\\
(a+1,b+k) & i=2\\
(a+i-1,b+i-2,i-3,i-4,\dotsc,1,k) & i>2
\end{cases}
\]

We conclude:
\begin{align}
D(\mu,k,i) & =\begin{cases}
\frac{n!(a+k+1-b)}{(a+k+1)!b!} & i=1\\
\frac{n!(a+1-b-k)}{(a+1)!(b+k)!} & i=2
\end{cases}\label{eq:D_for_i=00003D1,2}
\end{align}
while for $i>2$ the main term is 
\begin{align*}
\prod_{1\leq t<s\leq m}\left(\gamma^{(t)}-\gamma^{(s)}\right) & =(a+1-b)\left(\prod_{j=a+2}^{a+i-2}j\right)(a+i-1-k)\left(\prod_{j=b+1}^{b+i-3}j\right)\\
 & \qquad\cdot(b+i-2-k)\left(\prod_{j=1}^{i-4}j!\right)\left(\prod_{j=1-k}^{i-3-k}j\right)\\
 & =(a+1-b)(a+i-1-k)(b+i-2-k)\\
 & \frac{(a+i-2)!}{(a+1)!}\frac{(b+i-3)!}{b!}\left(\prod_{j=1}^{i-4}j!\right)(-1)^{i-3}\frac{(k-1)!}{(k+2-i)!}
\end{align*}
and the term in the denominator is
\[
\prod_{t=1}^{m}\gamma^{(t)}!=(a+i-1)!(b+i-2)!\left(\prod_{j=1}^{i-3}j!\right)k!
\]
Hence, after some simplification,
\begin{multline}
D(\mu,k,i)=\\
\frac{a+1-b}{k}\binom{n}{k-1,a+1,b}(-1)^{i+1}\binom{k-1}{i-3}\frac{(a+i-1-k)(b+i-2-k)}{(a+i-1)(b+i-2)}.\label{eq:summand_formula}
\end{multline}
By a partial fraction decomposition,
\begin{align*}
\lefteqn{\frac{(a+i-1-k)(b+i-2-k)}{(a+i-1)(b+i-2)}=}\qquad\qquad\\
 & =\left(1-\frac{k}{a+i-1}\right)\left(1-\frac{k}{b+i-2}\right)\\
 & =1-\frac{k}{a+i-1}-\frac{k}{b+i-2}+\frac{k^{2}}{\left(a+i-1\right)\left(b+i-2\right)}\\
 & =1-\frac{k}{a+i-1}-\frac{k}{b+i-2}-\frac{k^{2}}{a+1-b}\left(\frac{1}{a+i-1}-\frac{1}{b+i-2}\right)\\
 & =\frac{1}{a+1-b}\left((a+1-b)-\frac{k(a+1-b+k)}{a+i-1}-\frac{k(a+1-b-k)}{b+i-2}\right)
\end{align*}
so 
\begin{align*}
D(\mu,k,i) & =\frac{1}{k}\binom{n}{k-1,a+1,b}(-1)^{i+1}\binom{k-1}{i-3}\\
 & \cdot\left((a+1-b)-\frac{k(a+1-b+k)}{a+i-1}-\frac{k(a+1-b-k)}{b+i-2}\right)
\end{align*}
Next, let us calculate the eigenvalue $\rho(Y(F_{k,i}(\mu))).$ We
have
\[
\sum_{(i,j)\in\mu=[a,b]}c((i,j))=-\frac{a(a-1)}{2}-\frac{(b-1)(b-2)}{2}+1
\]
and by lemma \ref{lem:rho_value},
\begin{align}
\rho(Y(F_{k,i}(\mu))) & =\binom{n}{2}+\sum_{(i,j)\in\mu=[a,b]}c((i,j))+k\left(i-\mu_{i}-\frac{k+1}{2}\right)\nonumber \\
 & =\frac{n(n-1)}{2}-\frac{a(a-1)}{2}-\frac{(b-1)(b-2)}{2}-\frac{k(k+1)}{2}+1+ki-k\mu_{i}\nonumber \\
 & =ki-k\mu_{i}+\frac{1}{2}\Big(2+(a+b+k)(a+b+k-1)\nonumber \\
 & \qquad\qquad-\;a(a-1)-(b-1)(b-2)-k(k+1)\Big)\nonumber \\
 & =ki-k\mu_{i}+ab+ak+bk+b-k.\label{eq:general_rho_f}
\end{align}
For $i>2$, we have $\mu_{i}=0$, and we conclude that
\[
\exp\big(-t\rho(Y(F_{k,i}(\mu)))\big)=e^{-tki}e^{-t\left(ab+ak+bk+b-k\right)}
\]
Let us put everything together:
\begin{align*}
\Romanbar{III} & =\sum_{i=3}^{k+2}D(\mu,k,i)\exp\big(-t\rho(Y(F_{k,i}(\mu)))\big)\\
 & =e^{-t\left(ab+ak+bk+b-k\right)}\frac{1}{k}\binom{n}{k-1,a+1,b}\cdot\\
 & \cdot\sum_{i=3}^{k+2}(-1)^{i+1}\binom{k-1}{i-3}e^{-tki}\left((a+1-b)-\frac{k(a+1-b+k)}{a+i-1}-\frac{k(a+1-b-k)}{b+i-2}\right)
\end{align*}
Let us move $e^{-3tk}$ outside, and replace $i-3$ with $i$:
\begin{multline*}
\Romanbar{III}=e^{-t\left(ab+ak+bk+b-k\right)}\frac{1}{k}\binom{n}{k-1,a+1,b}\cdot e^{-3tk}\\
\cdot\sum_{i=0}^{k-1}(-1)^{i}\binom{k-1}{i}e^{-tki}\left((a-b+1)-\frac{k(a+1-b+k)}{a+i+2}-\frac{k(a+1-b-k)}{b+i+1}\right)
\end{multline*}
Let us evaluate the inner sum. For the first summand we use the binomial
formula:
\[
\sum_{i=0}^{k-1}(-1)^{i}\binom{k-1}{i}e^{-tki}=(1-e^{-tk})^{k-1}
\]
For the other two terms, we use integration and the binomial formula:

\begin{align*}
\sum_{i=0}^{k-1}(-1)^{i}\binom{k-1}{i}\frac{e^{-tki}}{a+i+2} & =e^{tk(a+2)}\sum_{i=0}^{k-1}(-1)^{i}\binom{k-1}{i}\frac{e^{-tk(a+i+2)}}{a+i+2}\\
 & =e^{tk(a+2)}\sum_{i=0}^{k-1}(-1)^{i}\binom{k-1}{i}\int_{0}^{e^{-tk}}x^{a+i+1}\,dx\\
 & =e^{tk(a+2)}\int_{0}^{e^{-tk}}x^{a+1}\sum_{i=0}^{k-1}\binom{k-1}{i}(-x)^{i}\,dx\\
 & =e^{tk(a+2)}\int_{0}^{e^{-tk}}x^{a+1}(1-x)^{k-1}\,dx
\end{align*}
and similarly,
\[
\sum_{i=0}^{k-1}(-1)^{i}\binom{k-1}{i}\frac{e^{-tki}}{b+i+1}=e^{tk(b+1)}\int_{0}^{e^{-tk}}x^{b}(1-x)^{k-1}\,dx
\]
Collecting the 3 terms, we obtain
\begin{align*}
\Romanbar{III} & =e^{-t\left(ab+ak+bk+b+2k\right)}\frac{1}{k}\binom{n}{k-1,a+1,b}\cdot\Bigg[(a+1-b)(1-e^{-tk})^{k-1}\\
 & \qquad-k(a+1-b+k)e^{tk(a+2)}\int_{0}^{e^{-tk}}x^{a+1}(1-x)^{k-1}\,dx\\
 & \qquad-k(a+1-b-k)e^{tk(b+1)}\int_{0}^{e^{-tk}}x^{b}(1-x)^{k-1}\,dx\bigg]
\end{align*}
and with the notation $y=e^{-tk}$:
\begin{align*}
\Romanbar{III} & =e^{-t\left(ab+b\right)}\binom{n}{k-1,a+1,b}\cdot\Bigg[\frac{a+1-b}{k}(1-y)^{k-1}y^{a+b+2}\\
 & \qquad-(a+1-b+k)y^{b}\int_{0}^{y}x^{a+1}(1-x)^{k-1}\,dx\\
 & \qquad-(a+1-b-k)y^{a+1}\int_{0}^{y}x^{b}(1-x)^{k-1}\,dx\bigg].
\end{align*}
Let us now add the terms $\Romanbar{I}$ and $\Romanbar{II}$. By
(\ref{eq:D_for_i=00003D1,2}) and (\ref{eq:general_rho_f}),
\begin{align*}
\Romanbar{I} & =D(\mu,k,1)\exp(-t\rho(Y(F_{k,1}(\mu))))\\
 & =\frac{n!(a+k+1-b)}{(a+k+1)!b!}e^{-t(ab+bk+b)}\\
 & =\binom{n}{k-1,a+1,b}\frac{(a+1)!(k-1)!(a+k+1-b)}{(a+k+1)!}e^{-t(ab+b)}y^{b}\\
\intertext{and}\Romanbar{II} & =D(\mu,k,2)\exp(-t\rho(Y(F_{k,2}(\mu))))\\
 & =\frac{n!(a+1-b-k)}{(a+1)!(b+k)!}e^{-t(k+ab+ak+b)}\\
 & =\binom{n}{k-1,a+1,b}\frac{b!(k-1)!(a+1-b-k)}{(b+k)!}e^{-t(ab+b)}y^{a+1}
\end{align*}
Collecting terms, we get the following formula for the inner sum
in equation (\ref{eq:exp_formula}) (see also equation (\ref{eq:tau_def})):
\begin{align}
\tau(\mu,k,t) & =\binom{n}{k-1,a+1,b}e^{-t(ab+b)}\cdot\Bigg[\frac{(a+1-b)}{k}y^{a+b+2}(1-y)^{k-1}\label{eq:five-term-tau}\\
 & \qquad-(a+1-b+k)y^{b}\int_{0}^{y}x^{a+1}(1-x)^{k-1}\,dx\nonumber \\
 & \qquad-(a+1-b-k)y^{a+1}\int_{0}^{y}x^{b}(1-x)^{k-1}\,dx\nonumber \\
 & \qquad+\frac{(a+1)!(k-1)!(a+k+1-b)}{(a+k+1)!}y^{b}\nonumber \\
 & \qquad+\frac{b!(k-1)!(a+1-b-k)}{(b+k)!}y^{a+1}\bigg].\nonumber 
\end{align}
We have the following Beta function identity:
\[
\int_{0}^{1}x^{a+1}(1-x)^{k-1}dx=B(a+2,k)=\frac{(k-1)!(a+1)!}{(a+k+1)!}
\]
Hence we may group the second and fourth summands in (\ref{eq:five-term-tau}):
\begin{align*}
\lefteqn{-(a+1-b+k)y^{b}\int_{0}^{y}x^{a+1}(1-x)^{k-1}\,dx+\frac{(a+1)!(k-1)!(a+k+1-b)}{(a+k+1)!}y^{b}}\qquad\qquad\\
 & =(a+1-b+k)y^{b}\left(\int_{0}^{1}x^{a+1}(1-x)^{k-1}\,dx-\int_{0}^{y}x^{a+1}(1-x)^{k-1}\,dx\right)\\
 & =(a+1-b+k)y^{b}\int_{y}^{1}x^{a+1}(1-x)^{k-1}\,dx.\\
\intertext{\text{Similarly, for the third and fifth terms, we have}}\lefteqn{-(a+1-b-k)y^{a+1}\int_{0}^{y}x^{b}(1-x)^{k-1}\,dx+\frac{b!(k-1)!(a+1-b-k)}{(b+k)!}y^{a+1}}\qquad\qquad\\
 & =(a+1-b-k)y^{a+1}\int_{y}^{1}x^{b}(1-x)^{k-1}\,dx.
\end{align*}
Therefore, (\ref{eq:five-term-tau}) becomes
\begin{align*}
\tau(\mu,k,t) & =\binom{n}{k-1,a+1,b}\cdot e^{-t(ab+b)}\Biggl[\frac{(a+1-b)y^{a+b+2}(1-y)^{k-1}}{k}\\
 & \qquad\qquad+\;y^{b}(a+1-b+k)\int_{y}^{1}x^{a+1}(1-x)^{k-1}dx\\
 & \qquad\qquad+\;y^{a+1}(a+1-b-k)\int_{y}^{1}x^{b}(1-x)^{k-1}dx\Biggr]=\phi(\mu,k,t)
\end{align*}
as desired.

Finally, let us treat the case $b=0$, in which $\mu=[n-k]$. We will
see that this case is covered by the calculation in \cite{BK}. By
\cite[lemma 3]{AK}, 
\begin{equation}
\ch(\alpha_{k})=\frac{1}{k}u_{k}S_{[n-k]}\label{eq:ch-of-alpha}
\end{equation}
by (\ref{eq:StanleyFormula}),
\begin{equation}
\ch(\alpha_{k})=\frac{1}{k}\sum_{\lambda\in E_{k}(\mu)}(-1)^{\hght(\lambda\backslash\mu)+1}S_{\lambda}\label{eq:ch-of-alpha-2}
\end{equation}
which is equivalent to 
\begin{equation}
\alpha_{k}=\frac{1}{k}\sum_{\lambda\in E_{k}(\mu)}(-1)^{\hght(\lambda\backslash\mu)+1}\chi_{\lambda}.\label{eq:alpha_decomp}
\end{equation}
By lemma \ref{lem:exp_char},
\begin{align*}
\mathbb{E}(\alpha_{k}(\pi(t))) & =\frac{1}{k}\sum_{\lambda\in E_{k}(\mu)}(-1)^{ht(\lambda\backslash\mu)+1}d_{\lambda}e^{-t\rho(\lambda)}\\
 & =\frac{1}{k}\tau(\mu,k,t)
\end{align*}
On the other hand, $\mathbb{E}(\alpha_{k}(\pi(t))$ has been calculated
in \cite[Theorem 1]{BK}:

\[
\mathbb{E}(\alpha_{k}(\pi(t)))=\binom{n}{k}\left[\frac{1}{k}y^{n-k+1}(1-y)^{k-1}+\int_{y}^{1}x^{n-k}(1-x)^{k-1}dx\right]
\]
comparing the two formulas for $\mathbb{E}(\alpha_{k}(\pi(t)))$,
we conclude that $\tau(\mu,k,t)=\phi(\mu,k,t)$ for $b=0$ as well.
\end{proof}

\begin{proof}
[Proof of theorem \ref{thm:precise}]Theorem \ref{thm:precise} is
a direct consequence of theorem \ref{thm:onek formula}, the definition
of $m^{2}(t)$ and $Z$ (equation (\ref{eq:Zmalpha})), the fact that
$\psi(b,k)=(a+1-b)\phi([n-k-b,b],k,t)$ ($\psi$ from theorem \ref{thm:precise}
and $\phi$ from theorem \ref{thm:onek formula}) and the fact that
$\sum k\alpha_{k}=n$ so $Z=\mathbb{E}2^{\alpha}=\frac{1}{n}\sum k\mathbb{E}(\alpha_{k}2^{\alpha})$.
\end{proof}

\section{Proof of theorem \ref{thm:phase transition}}

The proof of theorem \ref{thm:phase transition} is nothing but estimating
carefully the terms in theorem \ref{thm:precise} and is pretty straightforward,
but let us anyway start with a bird's eye view. Recall the terms $\psi(b,k,t)$
from theorem \ref{thm:precise} and that 
\[
m_{n}^{2}(t)=n\frac{\sum_{b,k}k\psi(b,k,t)}{\sum_{b,k}\psi(b,k,t)}.
\]
where the sums are over all ranges of the parameters i.e.\ $k$
runs from $1$ to $n$ and $b$ runs from $0$ to $\left\lfloor (n-k)/2\right\rfloor $.
We will find the set of $(b,k)$ for which $\psi$ is maximal (depending
on $t$) and the rest will turn out to be negligible. For visualisation
purposes it is convenient to define normalised values $\tau=tn$,
$\kappa=k/n$ and $\beta=b/n$ (no relation to the $\beta$ from \S \ref{subsec:quantum}
or the $\tau$ from \S \ref{sec:tau}). The relation $b\le\frac{1}{2}(n-k)$
translates to $\beta\le\frac{1}{2}(1-\kappa)$ (see the diagonal in
figure \ref{fig:betatau}). 
\begin{figure}
\input{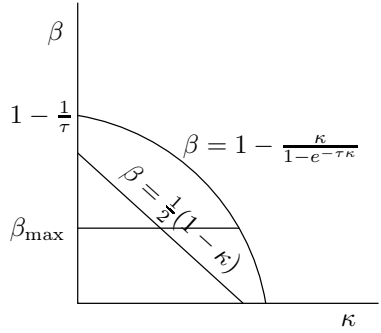}\caption{\label{fig:betatau}$\beta$ and $\tau$ (the case $\tau>2$ depicted)}
\end{figure}
We return therefore to the definition of $\psi$ in theorem \ref{thm:precise}
and recall that it has three terms (below we abbreviate $\psi(b,k)=\psi(b,k,t)$).
It will turn out that the term containing $\int_{y}^{1}x^{a+1}(1-x)^{k-1}$
(recall that $a=n-k-b$, that $a\ge b$ and that $y=e^{-tk}$) is
the most significant one. The integrand has its maximum in the interval
$[0,1]$ at $(a+1)/(a+k)$ and, as we will see, cases where the integral
does not capture this maximum, i.e.\ that $y>(a+1)/(a+k),$ are also
negligible. Thus we may restrict our attention to the corresponding
part of the $\kappa$, $\beta$ plane. Since $y=e^{-\tau\kappa}$
and 
\[
\frac{a+1}{a+k}=\frac{n-k-b+1}{n-b}=\frac{1-\kappa-\beta}{1-\beta}+o(1)
\]
the condition becomes $\beta\le1-\kappa/(1-e^{-\tau\kappa})$ (see
figure \ref{fig:betatau}, and especially note the intersection with
the $\kappa=0$ line, $\beta=1-1/\tau$). Further, when the integral
does capture the maximum, it may be estimated very well by the integral
from $0$ to $1$, which is a Beta integral and has an explicit formula.
We get for such $b$ and $k$,
\begin{multline*}
\psi(b,k)\sim(a+1-b)\binom{n}{k-1,a+1,b}e^{-t(ab+b)}y^{b}(a+1-b+k)\int_{0}^{1}x^{a+1}(1-x)^{k-1}\\
=\binom{n}{b}e^{-tb(n-b+1)}(n-k-2b+1)\frac{n+1-2b}{n-k}.
\end{multline*}
The right-hand side depends rather weakly on $k$. Dropping some polynomial
terms (in particular all terms containing $k$) we may write
\begin{align*}
\psi(b,k) & \sim\binom{n}{b}e^{-tb(n-b)}\\
 & \sim\left(\frac{n}{b}\right)^{b}\left(\frac{n}{n-b}\right)^{n-b}e^{-tb(n-b)}=\Big(\beta^{-\beta}(1-\beta)^{-(1-\beta)}e^{-\tau\beta(1-\beta)}\Big)^{n}
\end{align*}
We see that, as a function of $\beta$, its maximum can be found by
differentiating the expression on the right, and then solving a transcendental
equation. Denote the $\beta$ at which the maximum is attained by
$\beta_{\max}$. We do not care about the exact value of $\beta_{\max}$,
we only need to know whether the line $\{\beta=\beta_{\max}\}$ intersects
the region below both the curve and the diagonal in figure \ref{fig:betatau}
(they are not always arranged as in the picture, this depends on the
value of $\tau$), and for this we need only check the points $(\beta\in\{1-1/\tau,\nicefrac{1}{2}\},\kappa=0)$.
A simple check shows that $\beta_{\max}<\min(\frac{1}{2},1-\frac{1}{\tau})$
happens exactly when $\tau>2$, and hence this is the critical value
for the appearance of large cycles: when $\tau<2$ the ``mass''
sits at the point $(\beta=\max(0,1-1/\tau),\kappa=0)$ and in particular
there are no large cycles. When $\tau>2$ there is approximately equal
mass at each $k$ for which $\beta_{\max}\le1-\kappa/(1-e^{-\tau\kappa})$
and in particular there are macroscopic cycles (very small $k$, of
constant order, have larger mass, reflecting the fact that a positive
proportion of the mass still sits in small cycles). The details of
the calculation are below.
\begin{proof}
We start with the case of $\tau>2$ for which we need to show $m\apprge n$.
Fix some $\tau>2$. Let $\mu$ be the function $\mu(\beta)=(1-\beta)\log(1-\beta)+\beta\log\beta+\tau\beta(1-\beta)$,
let $\beta_{max}$ be the unique minimiser of $\mu$ in the interval
$[0,\tfrac{1}{2}]$, and note that $\beta_{\max}<\frac{1}{2}$ (here
and below we skip the details of the calculus exercises involved).
 Let $\kappa_{0}>0$ satisfy $\beta_{\max}<1-\kappa_{0}/(1-e^{-\tau\kappa_{0}})$
(note that $1-\kappa/(1-e^{-\tau\kappa})$ is a decreasing function
of $\kappa$, so that the inequality $\beta_{\max}<1-\kappa/(1-e^{-\tau\kappa})$
holds for all $\kappa<\kappa_{0}$). Assume in addition that $\kappa_{0}<\frac{1}{2}-\beta_{\max}$.
We will estimate, for $k<\kappa_{0}n$, the contribution of the term
$\sum_{b}k\psi(b,k)/\sum_{l,b}\psi(b,l)$ at time $t=\tau/n$ and
this will establish the theorem in the case $\tau>2$. Hence we need
to bound $\sum_{b,l}\psi(b,l)$ from above and $\sum_{b}\psi(b,k)$
from below.

For the lower bound we first note that $\psi(b,k)\ge0$ for all $b$
and $k$. Indeed, recall that the definition of $\psi$ (page \pageref{thm:precise})
has three summands, two (the second and third) involving partial beta
integrals and one not involving any integral. The only summand which
might be negative is the third, because of the term $a+1-b-k$ which
is sometimes negative (recall that $a=n-k-b$ and $y=e^{-tk}$). But
the second summand cancels it. Indeed, because $a+1>b$,
\begin{equation}
y^{a+1}\int_{y}^{1}x^{b}(1-x)^{k-1}\le y^{b}\int_{y}^{1}x^{a+1}(1-x)^{k-1}\label{eq:useful}
\end{equation}
and the sum of the two integrals in the definition of $\psi$ is
bigger or equal to $2(a+1-b)y^{b}\int_{y}^{1}x^{a+1}(1-x)^{k-1}$
and in particular is positive.

Therefore, to prove a lower bound we may restrict our attention to
only a subset of the allowed $b$ and $k$. We take $b$ such that
$|b-n\beta_{\max}|\le2\sqrt{n}$ and $k\in(\frac{1}{2}n\kappa_{0},n\kappa_{0})$.
Next we drop the first and third summands in the definition of $\psi$,
which we may as they are positive (the term $a+1-b-k$ is positive
for our values of $b$ and $k$ because of our assumption that $\kappa_{0}<\frac{1}{2}-\beta_{\max}$,
if $n$ is sufficiently large). We get
\[
\psi(b,k)\ge(a+1-b)\frac{n!}{(k-1)!(a+1)!b!}e^{-t(ab+b)}y^{b}(a+1-b+k)\int_{y}^{1}x^{a+1}(1-x)^{k-1}\,dx.
\]
Since $\beta_{\max}<\frac{1}{2}-\kappa_{0}$ we have 
\begin{equation}
a+1-b\apprge n\label{eq:nu beemet}
\end{equation}
 (here and throughout the proof, the implicit constant in the notation
$\apprge$ may depend only on $\tau$ and $\kappa_{0}$). Further,
we claim that
\begin{equation}
\int_{y}^{1}x^{a+1}(1-x)^{k-1}\,dx\apprge\int_{0}^{1}x^{a+1}(1-x)^{k-1}.\label{eq:no y}
\end{equation}
To see (\ref{eq:no y}) we first compare the integral to the maximum
of its integrand. On the one hand, the integral from $0$ to $1$
is equal to $(k-1)!(a+1)!/(a+k+1)!$. On the other hand the maximum
of the integrand is at $x_{\max}=(a+1)/(a+k)$. With Stirling's formula
we get
\begin{equation}
\int_{0}^{1}x^{a+1}(1-x)^{k-1}\approx\frac{\sqrt{k}}{n}x_{\max}^{a+1}(1-x_{\max})^{k-1}\label{eq:integral max integrand}
\end{equation}
where the notation $\approx$ means that the ratio between the two
sides is bounded above and below by constants depending only on $\tau$
and $\kappa_{0}$ (the estimate (\ref{eq:integral max integrand})
holds for any $b$ and $k$, not just under the restrictions above).

Returning to (\ref{eq:no y}), we note that 
\[
x_{\max}=1-\frac{k-1}{n-b}=1-\frac{\kappa}{1-\beta_{\textrm{max}}}+O(n^{-1/2}).
\]
Our choice of parameters gives $e^{-\tau\kappa}<1-\kappa/(1-\beta_{\max})$,
and this holds uniformly in $\kappa$, i.e.
\[
y=e^{-\tau\kappa}<1-\frac{\kappa}{1-\beta_{\textrm{max}}}+c(\tau,\kappa_{0})=x_{\textrm{max}}+c(\tau,\kappa_{0})+O(n^{-1/2})
\]
for some $c(\tau,\kappa_{0})>0$. Returning to $x^{a+1}(1-x)^{k-1}$
it is easy to check that it decays exponentially away from its maximum.
All these give us that 
\[
\int_{0}^{y}x^{a+1}(1-x)^{k-1}\le y^{a+1}(1-y)^{k-1}<(1-c_{c}(\tau,\kappa_{0}))^{n}x_{\max}^{a+1}(1-x_{\max})^{k-1}.
\]
The exponential factor $(1-c_{2})^{n}$ wins over the factor $\sqrt{k}/n$
in (\ref{eq:integral max integrand}), so we get $\int_{0}^{y}\ll\int_{0}^{1}$.
This establishes (\ref{eq:no y}) for $n$ sufficiently large, and
hence for every $n$ (perhaps with a different value for the constant
implicit in the notation $\apprge$).

We get
\begin{align*}
\psi(b,k) & \ge(a+1-b)\frac{n!}{(k-1)!(a+1)!b!}e^{-t(ab+b)}y^{b}(a+1-b+k)\;\cdot\\
 & \qquad\qquad\cdot\;\int_{y}^{1}x^{a+1}(1-x)^{k-1}\,dx\\
 & \stackrel{\clap{\ensuremath{{\scriptstyle {\textrm{{(\ref{eq:nu beemet},\ref{eq:no y})}}}}}}}{\gtrsim}n\frac{n!}{(k-1)!(a+1)!b!}e^{-tb(a+1)}e^{-tkb}\cdot n\cdot\int_{0}^{1}x^{a+1}(1-x)^{k-1}\,dx\\
 & =n^{2}\frac{n!}{(k-1)!(a+1)!b!}e^{-tb(a+1+k)}\frac{(a+1)!(k-1)!}{(a+k+1)!}\\
 & =n^{2}\frac{n!}{b!(n-b+1)!}e^{-tb(n-b+1)}\gtrsim n\binom{n}{b}e^{-tb(n-b)}
\end{align*}
where in the last inequality we used that $e^{-tb}\approx1$ and that
$n-b\approx n$. This holds for all $b$ satisfying $|b-n\beta_{\max}|\le2\sqrt{n}$,
and for such $b$ we have, by Stirling's formula,
\begin{align}
\binom{n}{b}e^{-tb(n-b)} & \approx\frac{1}{\sqrt{n}}\exp\left(-\Big(b\log\frac{b}{n}+(n-b)\log\frac{n-b}{n}+tb(n-b)\Big)\right)\nonumber \\
 & =\frac{1}{\sqrt{n}}\exp\left(-n\mu\Big(\frac{b}{n}\Big)\right)\apprge\frac{1}{\sqrt{n}}\exp\left(-n\mu(\beta_{\max})\right)\label{eq:enter the mu}
\end{align}
where the second inequality comes from Taylor expanding $\mu$ near
$\beta_{\max}$ to second order (since $\beta_{\max}$ is the minimum
of $\mu$, $\mu'(\beta_{\max})=0$). Summing over $b$ thus gives
\begin{equation}
\sum_{b}\psi(b,k)\apprge n\exp(-n\mu(\beta_{\max})).\label{eq:lower}
\end{equation}
This ends the lower bound.

For the upper bound we need to consider all 3 summands in the definition
of $\psi$ in theorem \ref{thm:precise}, as well as the case $b=0$.
The easiest are the integrals (the second and third summands). For
the second we write
\begin{equation}
y^{b}\int_{y}^{1}x^{a+1}(1-x)^{k-1}\,dx<y^{b}\int_{0}^{1}x^{a+1}(1-x)^{k-1}\,dx.\label{eq:term II}
\end{equation}
For the third, (\ref{eq:useful}) gives the same estimate:
\begin{equation}
y^{a+1}\int_{y}^{1}x^{b}(1-x)^{k-1}\,dx\stackrel{\textrm{(\ref{eq:useful},\ref{eq:term II})}}{<}y^{b}\int_{0}^{1}x^{a+1}(1-x)^{k-1}.\label{eq:termIII}
\end{equation}
For the first summand we use (\ref{eq:integral max integrand}) to
write
\begin{equation}
y^{a+b+2}(1-y)^{k-1}\le x_{\max}^{a+1}(1-x_{\max})^{k-1}y^{b+1}\stackrel{\clap{\ensuremath{{\scriptstyle \textrm{(\ref{eq:integral max integrand})}}}}}{\apprle}\frac{n}{\sqrt{k}}y^{b}\int_{0}^{1}x^{a+1}(1-x)^{k-1}.\label{eq:termI}
\end{equation}
(the divergence as $k\to1$ reflects the fact that indeed, in our
time scale, there are still many small cycles). We covered the three
terms in theorem \ref{thm:precise} in (\ref{eq:term II}), (\ref{eq:termIII})
and (\ref{eq:termI}) so we may write
\begin{align}
\psi(b,k,t) & \apprle\binom{n}{k-1,a+1,b}e^{-tb(a+1)}\cdot n^{2}(1+nk^{-3/2})y^{b}\int_{0}^{1}x^{a+1}(1-x)^{k-1}\nonumber \\
 & =\frac{n!}{(n-b+1)!b!}n^{2}(1+nk^{-3/2})e^{-tb(n-b-1)}\nonumber \\
 & \apprle n\binom{n}{b}e^{-tb(n-b)}(1+nk^{-3/2}).\label{eq:upper one b}
\end{align}
Formally, we only showed (\ref{eq:upper one b}) for $b>0$, but an
inspection of the case $b=0$ shows that it differs from that $b>0$
case only in details of the polynomial factors and in terms such as
$y^{C}$, all of which may be ``folded'' into the implicit constant.
Hence (\ref{eq:upper one b}) holds also for $b=0$.

All that remains is to sum (\ref{eq:upper one b}) over all $k$ and
$b$. Summing over $k$ simply gives another factor of $n$ i.e.\ $n^{2}\binom{n}{b}e^{-tb(n-b)}$.
As for the sum over $b$, a simple calculus exercise shows that our
function $\mu$ defined by $\mu(\beta)=\beta\log\beta+(1-\beta)\log(1-\beta)+\tau\beta(1-\beta)$
satisfies 
\begin{equation}
\mu(\beta)\ge\mu(\beta_{\max})+c(\beta-\beta_{\max})^{2}\label{eq:mu quadratic}
\end{equation}
for some constant $c$ depending only on $\tau.$ Hence we have
\begin{multline*}
\sum_{b=0}^{n/2}\binom{n}{b}e^{-tb(n-b)}\apprle\sum_{b=0}^{n/2}\frac{1}{\sqrt{n}}\exp\left(-n\mu\Big(\frac{b}{n}\Big)\right)\\
\apprle\frac{1}{\sqrt{n}}\exp(-n\mu(\beta_{\max}))\sum_{b=0}^{n/2}\exp\left(-nc\Big(\frac{b}{n}-\beta_{\max}\Big)^{2}\right)\apprle\exp(-n\mu(\beta_{\max}))
\end{multline*}
(we skip the details of the last calculation, which is standard).
All in all we get
\begin{equation}
\sum_{k,b}\psi(b,k)\apprle n^{2}\exp(-n\mu(\beta_{\max})).\label{eq:upper}
\end{equation}
With (\ref{eq:lower}) the case of $\tau>2$ is finished: we get,
for every $k\in(\frac{1}{2}n\kappa_{0},n\kappa_{0})$
\[
\frac{\sum_{b}\psi(b,k)}{\sum_{b,l}\psi(b,l)}\apprge\frac{n\exp(-n\mu(\beta_{\max}))}{n^{2}\exp(-n\mu(\beta_{\max}))}=\frac{1}{n}.
\]
Hence
\[
m^{2}=n\sum_{k=0}^{n}k\frac{\sum_{b}\psi(b,k)}{\sum_{b,l}\psi(b,l)}\apprge n\sum_{k=\lceil\frac{1}{2}n\kappa_{0}\rceil}^{\lfloor n\kappa_{0}\rfloor}\frac{k}{n}\apprge n^{2}
\]
as needed.

\subsection*{The case \texorpdfstring{$\tau<2$}{tau smaller than 2}.}

We retain the notation $\mu(\beta)=\beta\log\beta+(1-\beta)\log(1-\beta)+\tau\beta(1-\beta)$
from the previous part, but note that in this case this function is
decreasing on all of $[0,\frac{1}{2}]$. Again we need upper and lower
bounds on $\psi(b,k)$.

The lower bound follows by considering $k=1$ and $b\in[\frac{1}{2}n-2\sqrt{n},\frac{1}{2}n-\sqrt{n}]$,
and by dropping the two terms in the definition of $\psi$ which contain
integrals. (Recall that $\psi(b,k)\ge0$ for all $b$ and $k$, which
explains why we may consider only a subset of the $k$ and $b$, and
that the sum of the two integral terms sum is positive. Both facts
are explained in the discussion in the beginning of the proof of the
case $\tau>2$, around formula (\ref{eq:useful})). This gives 
\[
\psi(b,k)\ge(a-1+b)^{2}\binom{n}{b}e^{-t(ab+b)}y^{a+b+2}\apprge n\cdot\frac{2^{n}}{\sqrt{n}}\cdot e^{-n\tau/4}.
\]
Since there are $\approx\sqrt{n}$ terms with this estimate we get
\begin{equation}
\sum_{b,k}\psi(b,k)\apprge n2^{n}e^{-n\tau/4}.\label{eq:tau<2 lower}
\end{equation}
Notice that the exponential terms are exactly $\exp(-n\mu(\frac{1}{2}))$.

For the upper bound we need to split into two cases: $\beta=b/n<\frac{1}{2}-(2-\tau)/10$
and the complement. We start with the first, for which we can use
(\ref{eq:upper one b}) as is (its proof did not use any assumptions
on $\tau$, $b$ or $k$). We get
\[
\psi(b,k)\apprle n^{2}\binom{n}{b}e^{-tb(n-b)}\apprle n^{3/2}\exp(-n\mu(\beta))
\]
and because $\beta<\frac{1}{2}$ and $\mu$ is strictly decreasing,
$\psi(b,k)\apprle n2^{n}e^{-n\tau/4}(1-c)^{n}$ for some $c(\tau)>0$.
Thus these terms are negligible.

The other case is $\beta\ge\frac{1}{2}-(2-\tau)/10$. In this case
\[
e^{-\tau\kappa}>1-\frac{\kappa}{1-b/n}
\]
which means that for all $k$ sufficiently large, $y>x_{\max}+c(\tau)\kappa$
(recall that $x_{\max}=1-(k-1)/(n-b)$). This gives an estimate for
the integrals in the definition of $\psi$ by their value at $y$
times the length of the integration interval (up to a constant, for
$k\apprle1$). For example,
\begin{align*}
\int_{y}^{1}x^{a+1}(1-x)^{k-1} & \apprle(1-y)\cdot y^{a+1}(1-y)^{k-1}\apprle\frac{k}{n}x_{\max}^{a+1}(1-x_{\max})^{k-1}(1-c)^{k}\\
 & \stackrel{\textrm{\clap{(\ref{eq:integral max integrand})}}}{\apprle}\frac{k}{n}\cdot\frac{n}{\sqrt{k}}\cdot\frac{(a+1)!(k-1)!}{(a+k+1)!}(1-c)^{k}
\end{align*}
A similar estimate holds for the other integral. The remaining summand
in the definition of $\psi$ has a similar estimate, but without the
length of the integration interval, i.e.\ the term $\frac{k}{n}$
above. We get
\begin{align*}
\psi(b,k) & \apprle(a+1-b)\binom{n}{k-1,a+1,b}e^{-tb(a+1)}y^{b}\cdot\frac{n}{\sqrt{k}}\frac{(a+1)!(k-1)!}{(a+k+1)!}(1-c)^{k}\cdot\\
 & \qquad\qquad\left[\frac{(a+1-b)y}{k}+\frac{k}{n}(a+1-b+k)+\frac{k}{n}|a+1-b-k|\right]\\
 & \apprle(n+1-2b)^{2}\binom{n}{b}e^{-tb(n-b)}\frac{1}{\sqrt{k}}(1-c)^{k}\left(\frac{1}{k}+\frac{k}{n}\right)\\
 & \stackrel{\textrm{\clap{(\ref{eq:enter the mu})}}}{\apprle}\frac{1}{\sqrt{n}}(n+1-2b)^{2}\exp(-n\mu(\beta))(1-c)^{k}.
\end{align*}
Finally, writing $\mu(\beta)\ge\mu(\frac{1}{2})-c(1-2\beta)^{2}$
(compare to (\ref{eq:mu quadratic})) we get
\begin{align}
\sum_{b}\psi(b,k) & \apprle\frac{1}{\sqrt{n}}2^{n}e^{-\tau n/4}(1-c)^{k}\sum_{b=0}^{n/2}(n+1-2b)^{2}\exp\big(-c(1-2b/n)^{2}\big)\nonumber \\
 & \apprle n2^{n}e^{-n\tau/4}(1-c)^{k}.\label{eq:tau<2 upper}
\end{align}
where the first sum is only over $b$ such that $b/n\ge\frac{1}{2}-(2-\tau)/10$,
and of course such that $b\le\frac{1}{2}(n-k)$. Again, we omit the
details of the last inequality. Since the sum over the other values
of $b$ is negligible, we see that (\ref{eq:tau<2 upper}) in fact
holds (perhaps with a different value of $c$) even when the first
sum is over all $b$. The proof is now finished: we write 
\[
m^{2}=n\sum_{k=1}^{n}k\frac{\sum_{b}\psi(b,k)}{\sum_{b,l}\psi(b,l)}\stackrel{\textrm{(\ref{eq:tau<2 lower},\ref{eq:tau<2 upper})}}{\apprle}n\sum_{k=1}^{n}k(1-c)^{k}\apprle n
\]
as needed.
\end{proof}

\appendix

\section{The spontaneous and the residual magnetisations}

The purpose of this appendix is to show that for $t$ such that the
expected square magnetisation $m$ satisfies $m(t)\apprge n$ we also
have that the residual magnetisation $m^{*}$ (recall its definition
(\ref{eq:mstar})) satisfies $m^{*}(t)>0$. Recall that $Z=\mathbb{E}(2^{\alpha})$
and $m^{2}=(1/Z)\mathbb{E}((\sum k^{2}\alpha_{k})2^{\alpha})$. Let
$\varepsilon$ be some positive number and write 
\begin{align*}
\sum_{k}k^{2}\alpha_{k} & =\sum_{k\ge\varepsilon n}k^{2}\alpha_{k}+\sum_{k<\varepsilon n}k^{2}\alpha_{k}\le n\Big(\sum_{k\ge\varepsilon n}k\alpha_{k}\Big)+\varepsilon n\sum_{k<\varepsilon n}k\alpha_{k}\\
 & =n\Big(\sum_{k\ge\varepsilon n}k\alpha_{k}\Big)\mathbbm{1}\bigg\{\sum_{k\ge\varepsilon n}\alpha_{k}>0\bigg\}+\varepsilon n\sum_{k<\varepsilon n}k\alpha_{k}\\
 & \le n^{2}\mathbbm{1}\bigg\{\sum_{k\ge\varepsilon n}\alpha_{k}>0\bigg\}+\varepsilon n^{2}
\end{align*}
where in the second inequality we used $\sum k\alpha_{k}=n$. Now
multiply by $2^{\alpha}$ and take expectations and get
\[
\mathbb{E}\Big(\mathbbm{1}\bigg\{\sum_{k\ge\varepsilon n}\alpha_{k}>0\bigg\}2^{\alpha}\Big)\ge\frac{1}{n^{2}}Zm^{2}-\varepsilon Z
\]
Recall our assumption that $m>cn$ for some $c>0$. Choose $\varepsilon=\frac{1}{2}c^{2}$
and get
\[
\mathbb{E}\Big(\mathbbm{1}\bigg\{\sum_{k\ge\varepsilon n}\alpha_{k}>0\bigg\}2^{\alpha}\Big)\ge\frac{1}{2}c^{2}Z.
\]
Now examine the definition of $m^{*}$. Fix some $M$ and let $n>M/\varepsilon$.
Then 
\[
\sum_{k=M+1}^{n}\frac{k}{n}\alpha_{k}\ge\sum_{k\ge\varepsilon n}\frac{k}{n}\alpha_{k}\ge\varepsilon\mathbbm{1}\bigg\{\sum_{k\ge\varepsilon n}\alpha_{k}>0\bigg\}.
\]
Again multiplying by $2^{\alpha}$ and taking expectations gives
\[
\sum_{k=M+1}^{n}\frac{k}{n}\mathbb{E}(\alpha_{k}2^{\alpha(\pi(\beta))})\ge\varepsilon\mathbb{E}\bigg(\mathbbm{1}\bigg\{\sum_{k\ge\varepsilon n}\alpha_{k}>0\bigg\}2^{\alpha}\bigg)\ge\varepsilon\cdot\frac{1}{2}c^{2}Z=\varepsilon^{2}Z
\]
or

\[
\varliminf_{n\to\infty}\frac{1}{Z_{n}(\beta)}\sum_{k=M+1}^{n}\frac{k}{n}\mathbb{E}(\alpha_{k}2^{\alpha(\pi(\beta))})\ge\varepsilon^{2}.
\]
Since $\varepsilon$ did not depend on $M$, we get $m^{*}\ge\varepsilon^{2}$,
as needed.

\end{document}